\documentclass[11pt,reqno]{amsart}

\usepackage[T1]{fontenc}
\usepackage[utf8]{inputenc}
\usepackage{amsmath,amsfonts,amssymb,amsthm}
\usepackage{mathrsfs}
\usepackage[margin=1.1in]{geometry}
\usepackage[colorlinks=true,linkcolor=blue,citecolor=blue,urlcolor=blue]{hyperref}

\numberwithin{equation}{section}

\theoremstyle{plain}
\newtheorem{theorem}{Theorem}
\newtheorem{lemma}{Lemma}

\newtheorem{thmA}{Theorem}

\theoremstyle{definition}
\newtheorem{remark}{Remark}

\renewcommand{\le}{\leqslant}
\renewcommand{\ge}{\geqslant}

\begin{document}

\title[On the stability of the independence number]
      {On the Stability of the Independence Number\\ in Random Distance Graphs}

\author{V.~A.~Pokhachevskiy}
\address{Lomonosov Moscow State University, Moscow, Russia}
\email{pokhachevskiy@gmail.com}

\author{A.~M.~Raigorodskii}
\address{Moscow Institute of Physics and Technology (National Research
University), Dolgoprudny, Moscow Region, Russia;
Lomonosov Moscow State University, Moscow, Russia;
Caucasus Mathematical Center, Adyghe State University, Maikop, Russia;
Buryat State University, Institute for Mathematics and Informatics,
Ulan-Ude, Russia}
\email{mraigor@yandex.ru}

\subjclass[2020]{05D05, 05C80, 05C69}

\thanks{This paper has been submitted to \emph{Matematicheskie Zametki}
(\emph{Mathematical Notes}).}

\date{\today}
\begin{abstract}
We consider a random subgraph $G_p(n,r,<s)$ of the complete distance
graph $G(n,r,<s)$ whose vertices are the $r$-element subsets of the set
$\{1,\dots,n\}$ and whose edges join pairs of subsets that intersect
in fewer than $s$ elements; each edge survives
independently of the others with probability $p$. The independence
number of the graph $G(n,r,<s)$ equals $C_{n-s}^{r-s}$ --- this is the
classical Erd\H{o}s--Ko--Rado theorem. We prove that, for
$r=r(n)\to\infty$, $s=s(n)\to\infty$, $s=o(r)$, $r^2=o(n)$ and
$p\ge 16\,s r^2\ln(n/r)/n$, with probability tending to~1 the
independence number of the random graph $G_p(n,r,<s)$ also equals
$C_{n-s}^{r-s}$, i.e., the Erd\H{o}s--Ko--Rado result is stable under
random sparsification of the graph. Thereby, in the range of parameters
$s\to\infty$, $s=o(r)$, a recent result of Raigorodskii and Karas is
strengthened: the lower bound on the probability $p$ that guarantees
stability is lowered by a factor of about $r/s$.
\end{abstract}

\maketitle

\medskip\noindent\textbf{Keywords:} Erd\H{o}s--Ko--Rado theorem, distance
graph, random graph, independence number, stability.

\section{Introduction}
\label{sec:intro}

\subsection{The Erd\H{o}s--Ko--Rado theorem and its stability}

Let $n$, $r$, $s$ be positive integers, $s<r<n$, and let
$[n]=\{1,2,\dots,n\}$. A family $\mathcal{M}$ of $r$-element subsets of
the set $[n]$ is called {\it $s$-intersecting} if any two of its elements
have at least $s$ common elements. Denote by $m(n,r,s)$ the maximum
cardinality of an $s$-intersecting family of $r$-element subsets of the
set $[n]$.

The simplest example of an $s$-intersecting family is constructed as
follows: one fixes $s$ elements of the set $[n]$ and takes all
$r$-element subsets containing them. Such a family is called a {\it
star}; its cardinality equals $C_{n-s}^{r-s}$, whence
$m(n,r,s)\ge C_{n-s}^{r-s}$. The famous Erd\H{o}s--Ko--Rado theorem
(see~\cite{EKR}) states that for all sufficiently large $n$ this bound
is sharp.

\begin{thmA}[Erd\H{o}s, Ko, Rado \cite{EKR}]
\label{thA}
Let $r$ and $s$ be fixed positive integers, $s<r$. Then there exists
$n_0(r,s)$ such that $m(n,r,s)=C_{n-s}^{r-s}$ for all $n\ge n_0(r,s)$.
\end{thmA}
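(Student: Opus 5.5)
We prove Theorem~\ref{thA} by the classical kernel (delta-system) argument of Erd\H{o}s, Ko and Rado, keeping $r$ and $s$ fixed and letting $n$ grow. Let $\mathcal{M}$ be an $s$-intersecting family of $r$-subsets of $[n]$ with $|\mathcal{M}|\ge C_{n-s}^{r-s}$. We aim to show that $\mathcal{M}$ is contained in a star. We split into two cases, according to whether all members of $\mathcal{M}$ share a common $s$-set.

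\emph{Case 1: all members of $\mathcal{M}$ contain a fixed $s$-set $S$.} Then $\mathcal{M}$ lies in the star of $S$, so $|\mathcal{M}|\le C_{n-s}^{r-s}$.

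\emph{Case 2: no $s$-set is contained in every member of $\mathcal{M}$.} We show that $|\mathcal{M}|=O_{r,s}(n^{r-s-1})$. Since $C_{n-s}^{r-s}\asymp n^{r-s}$, this is smaller than $C_{n-s}^{r-s}$ for $n\ge n_0(r,s)$, which proves the theorem.

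The key tool for Case 2 is the notion of a \emph{cover}. Call a set $T$ with $|T|\le r$ a cover of $\mathcal{M}$ if $|T\cap A|\ge s$ for every $A\in\mathcal{M}$. Every member of $\mathcal{M}$ is itself a cover. Let $t$ be the minimum of $|T|$ over all covers $T$. Since $\mathcal{M}$ has no common $s$-set, we have $t\ge s+1$.

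We bound $|\mathcal{M}|$ by an iterative branching procedure. Fix one member $A_0\in\mathcal{M}$. Every $A\in\mathcal{M}$ meets $A_0$ in some $s$-set $S_1\subseteq A_0$; there are at most $C_r^s$ choices for $S_1$. Since $S_1$ is not a cover, some $B\in\mathcal{M}$ satisfies $|B\cap S_1|<s$. Then $A$ must contain at least one element of $B\setminus S_1$, giving at most $r$ ways to enlarge $S_1$ to $S_2\subseteq A$ with $|S_2|=s+1$.

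More generally, suppose we have built $S_j\subseteq A$ with $|S_j|=s+j-1$ and $|S_j|<r$. Then $S_j$ fails to lie in every member of $\mathcal{M}$ in the required way; the precise condition is that some $B\in\mathcal{M}$ is not contained in any superset chosen so far. In that case $A$ shares at least one element with $B\setminus S_j$, because $|A\cap B|\ge s$ while $|S_j\cap B|$ may be too small. We therefore add that element to obtain $S_{j+1}$.

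Making this step rigorous is the main obstacle, and it is the classical Hilton--Milner/Frankl counting. The clean way is to count all $A$ containing a given set $S$ of size $k$ that is \emph{not} a cover, and then iterate on $k$. We fix the details as follows. Any $A\supseteq S$ with $A\in\mathcal{M}$ must meet the witness $B$ (for which $|B\cap S|<s$) in at least one element of $B\setminus S$. Hence
\[
N(S)\le r\max_{|S'|=|S|+1} N(S'),
\]
where $N(S)$ is the number of members of $\mathcal{M}$ containing $S$. Iterating from the $s$-sets, which number at most $C_r^s$ in $A_0$, we proceed until we reach a set $S$ that is a cover. Such a set has size at least $t\ge s+1$, and trivially $N(S)\le C_{n-|S|}^{r-|S|}\le n^{r-s-1}$. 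Thus
\[
|\mathcal{M}|\le C_r^s\, r^{\,r}\, n^{r-s-1}=O_{r,s}(n^{r-s-1}),
\]
as claimed. The only subtlety to check is that the branching never requires more than $r-s$ steps, and that at each non-cover stage a witness $B$ genuinely exists. The latter holds by the definition of a non-cover, and the former because sizes are bounded by $r$.

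Comparing this bound with $C_{n-s}^{r-s}\ge (n-r)^{r-s}/(r-s)!$ shows that Case 2 is impossible once $n$ is large enough in terms of $r$ and $s$. Together with Case 1, this gives $m(n,r,s)=C_{n-s}^{r-s}$ for all $n\ge n_0(r,s)$.
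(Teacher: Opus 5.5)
Your argument is correct. Note that the paper does not prove Theorem~\ref{thA} at all; it is quoted from \cite{EKR}. The only EKR-type deduction in the paper is the remark following Theorem~\ref{th:main}, and it goes through the stability Theorem~\ref{thB}: a family not contained in a star has at most $\varphi(n,r,s)$ members, and $\varphi(n,r,s)$ is far below $C_{n-s}^{r-s}$. For fixed $r,s$ one has $\varphi(n,r,s)=O_{r,s}(n^{r-s-1})$, so Theorem~\ref{thA} follows immediately from Theorem~\ref{thB}.

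Your Case~2 replaces that deep result by an elementary, self-contained count. It gives the same order $O_{r,s}(n^{r-s-1})$ for families with no common $s$-set, and as a bonus shows that every maximum family is a star. What you give up is the constant: your bound forces $n_0$ of order $C_r^s\,r^{r}(r-s)!$. So your argument cannot replace Theorem~\ref{thB} in the paper's regime of growing $r,s$, where only $r^2=o(n)$ is assumed. Theorem~\ref{thB}, by contrast, needs only $n>(r-s+1)(s+1)$ and identifies the largest non-star families exactly.

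Two small points. First, the iteration through covers is superfluous. Once $A\supseteq S_1\cup\{x\}$ with $x\in B\setminus S_1$, the trivial bound $N(S_1\cup\{x\})\le C_{n-s-1}^{r-s-1}$ holds whether or not this set is a cover, so directly
\[
|\mathcal{M}|\le r\,C_r^s\,C_{n-s-1}^{r-s-1}<\frac{n-s}{r-s}\,C_{n-s-1}^{r-s-1}=C_{n-s}^{r-s}
\quad\text{once } n-s>r(r-s)\,C_r^s .
\]
This is a much better $n_0$, although $r^2C_r^s$ can still far exceed $n$ in the paper's regime $s\to\infty$, $r^2=o(n)$.

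Second, the paragraph beginning ``More generally, suppose we have built $S_j$'' states the wrong witness condition (``some $B\in\mathcal{M}$ is not contained in any superset chosen so far''). The correct condition is $|B\cap S_j|<s$, which is what you use in the next paragraph, so that paragraph should simply be deleted.
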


Theorem~\ref{thA} is one of the fundamental results of extremal
combinatorics; it has been generalized and refined many times (see the
survey \cite{DezaFrankl} and the paper \cite{AK}, in which the quantity
$m(n,r,s)$ is found for all values of the parameters). Let us note, in
particular, a stability result that plays a key role in the present
paper: every $s$-intersecting family that is not contained in any star
is substantially smaller than the maximum one. For fixed $r$ and $s$ and
all sufficiently large $n$ this was proved by Frankl \cite{Frankl}; the
best possible bound on $n$, independent of $r$ and $s$, was found by
Ahlswede and Khachatrian \cite{AK1}, which is what makes the result
applicable for growing $r=r(n)$ and $s=s(n)$.

\begin{thmA}[Frankl \cite{Frankl}, Ahlswede, Khachatrian \cite{AK1}]
\label{thB}
Let $n>(r-s+1)(s+1)$, and let $\mathcal{M}$ be an arbitrary
$s$-intersecting family of $r$-element subsets of the set $[n]$. Then
either $\mathcal{M}$ is contained in some star, or
\begin{equation}
\label{eq:frankl}
|\mathcal{M}|\le\max\Bigl\{\,(s+2)\,C_{n-s-2}^{r-s-1}+C_{n-s-2}^{r-s-2},\ \
\sum_{i=1}^{r-s}C_{r-s+1}^{i}\,C_{n-r-1}^{r-s-i}+s\,\Bigr\}.
\end{equation}
(The first quantity under the maximum sign corresponds to the family of
all $r$-sets containing at least $s+1$ elements of $\{1,\dots,s+2\}$;
the second one corresponds to a generalization of the Hilton--Milner
example \cite{HM}. For $r\le2s+1$ the maximum is attained at the first
of them, and for $r>2s+1$ and sufficiently large $n$ at the second one.)
\end{thmA}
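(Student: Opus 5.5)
The statement is Theorem~B, a classical result quoted from \cite{Frankl} and \cite{AK1}. If I had to reprove it, I would use the shifting and generating-set machinery of Ahlswede--Khachatrian, restricted to families that are \emph{not} contained in a star. The threshold $n>(r-s+1)(s+1)$ is exactly the Ahlswede--Khachatrian condition under which, in the complete intersection theorem, the star beats every family
\[
\mathcal F_i=\{F:\ |F\cap[s+2i]|\ge s+i\},\qquad i\ge1 .
\]
The proof would refine that comparison one level down.

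\textbf{Step 1: reduce to a shifted non-trivial family.} Take a maximal $s$-intersecting family $\mathcal M$ that lies in no star, so $|\bigcap\mathcal M|<s$. Apply the shifts $S_{ij}$ ($i<j$); each preserves $|\mathcal M|$ and the $s$-intersecting property. A shift may, however, create a family inside a star, i.e. with $|\bigcap|\ge s$. I would follow Frankl's device:
\begin{itemize}
\item perform only the shifts that keep $|\bigcap|<s$;
\item when every remaining shift would produce a star-contained family, the structure is already rigid: some $s+1$ elements are such that each member misses at most one of them.
\end{itemize}
In that rigid case $\mathcal M$ lies in a Hilton--Milner-type family. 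That family fixes $s$ elements $T$; it contains all $r$-sets containing $T$ and meeting a fixed set $B$ of $r-s+1$ other elements, together with the $s$-intersecting completions of the sets containing $B$. Counting these gives the second expression in \eqref{eq:frankl}, namely $\sum_i C_{r-s+1}^{i}C_{n-r-1}^{r-s-i}+s$.

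\textbf{Step 2: pass to generating sets.} In the remaining case $\mathcal M$ is fully left-compressed and non-trivial. Let $g(\mathcal M)$ be its minimal generating sets, i.e. minimal sets $G$ such that $\mathcal M$ contains every $r$-set containing $G$. Apply the Ahlswede--Khachatrian pushing--pulling argument: replace the generators of maximal size $\ell$ by their ``pushed'' or ``pulled'' versions. Both operations preserve the $s$-intersecting property, and one of them does not decrease $|\mathcal M|$ as long as $n>(r-s+1)(s+1)$. Iterating, $\mathcal M$ may be assumed to be some $\mathcal F_i$, or a family obtained from a star by this procedure while keeping non-triviality.

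Among non-trivial families, the extremal candidates are:
\begin{itemize}
\item $\mathcal F_1=\{F:\ |F\cap[s+2]|\ge s+1\}$, of size $(s+2)C_{n-s-2}^{r-s-1}+C_{n-s-2}^{r-s-2}$;
\item the Hilton--Milner family from Step~1.
\end{itemize}
The families $\mathcal F_i$ with $i\ge2$ are dominated by $\mathcal F_1$ in the stated range of $n$. This follows from the ratio estimate $|\mathcal F_{i+1}|/|\mathcal F_i|<1$, which reduces to the same inequality $n>(r-s+1)(s+1)$.

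\textbf{Main obstacle.} The hard step is the interaction in Step~2 between compression and non-triviality. Pushing and pulling can turn a non-trivial family into a star, in which case the comparison with $C_{n-s}^{r-s}$ says nothing. I would handle this by tracking the last generating configuration before the family collapses into a star. One then shows that this configuration is contained in $\mathcal F_1$ or in the Hilton--Milner family, which gives the bound \eqref{eq:frankl}.
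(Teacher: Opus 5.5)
\textbf{Verdict: genuine gap.} The paper does not prove Theorem~\ref{thB}. It imports it from Frankl and Ahlswede--Khachatrian and uses it only through Lemma~\ref{lem:asymp}\,(d) in Lemma~\ref{lem:star}\,(i). Judged as a self-contained reproof, your outline fails exactly at the step you call the main obstacle.

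For $n>(r-s+1)(s+1)$ the Ahlswede--Khachatrian modifications of generating sets lead towards the star, which is the optimum in this range. On their own they therefore give only $|\mathcal M|\le C_{n-s}^{r-s}$. Everything then rests on your claim that the last non-trivial configuration before the collapse is contained in $\mathcal F_1$ or in a Hilton--Milner-type family, and that claim is false. For $2\le j\le r-s+1$ put
\[
\mathcal G_j=\{F\colon [s]\subseteq F,\ F\cap[s+1,s+j]\ne\emptyset\}\cup\{F\colon [s+1,s+j]\subseteq F,\ |F\cap[s]|=s-1\}.
\]
\begin{itemize}
\item Each $\mathcal G_j$ is left-compressed, $s$-intersecting, maximal and non-trivial.
\item $\mathcal G_2=\mathcal F_1$, and $\mathcal G_{r-s+1}$ is the Hilton--Milner-type family.
\item For $3\le j\le r-s$, $\mathcal G_j$ is contained in neither of them.
\item The generators of $\mathcal G_j$ through $s_+=s+j$ are $[s]\cup\{s+j\}$ and the sets $([s]\setminus\{y\})\cup[s+1,s+j]$, of sizes $s+1$ and $s+j-1$. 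The two possible modifications produce the star $S_{[s]}$ and $\mathcal G_{j-1}$, respectively.
\item $|\mathcal G_j|-|\mathcal G_{j-1}|=C_{n-s-j}^{r-s-1}-s\,C_{n-s-j}^{r-s-j+2}>0$ for $j\ge4$ and large $n$.
\end{itemize}
So if you start from $\mathcal M=\mathcal G_j$ with $4\le j\le r-s$, the only non-decreasing move collapses the family immediately. Your ``last configuration'' is then $\mathcal G_j$ itself, which lies in neither extremal family.

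What is needed is a \emph{size} comparison carried out inside the class of non-trivial families. For these examples it is the fact that $j\mapsto|\mathcal G_j|$ is maximized at $j=2$ or $j=r-s+1$; this is where the dichotomy $r\le 2s+1$ versus $r>2s+1$ comes from. You would also need an argument reducing an arbitrary left-compressed non-trivial family to such a comparison. That is the actual content of the Ahlswede--Khachatrian theorem, and your outline does not supply it.

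Step~1 also needs repair, though it is fixable.
\begin{itemize}
\item The property you record (some $s+1$ elements of which every member misses at most one) does not imply containment in a Hilton--Milner-type family. The family $\mathcal F_1$ has this property and, for $r\ge s+2$, lies in no such family.
\item A collapsing shift gives something stronger. If $\mathcal M$ is non-trivial and $S_{ij}(\mathcal M)\subseteq S_T$ with $i<j$, then $i\in T$ and $j\notin T$. Every member of $\mathcal M$ contains $K=T\setminus\{i\}$, with $|K|=s-1$, and every member meets $\{i,j\}$.
\item Then $\{F\setminus K\colon F\in\mathcal M\}$ is a non-trivial intersecting family of $(r-s+1)$-subsets of an $(n-s+1)$-set. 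The condition $n-s+1>2(r-s+1)$ follows from $n>(r-s+1)(s+1)$.
\item The Hilton--Milner theorem therefore gives $|\mathcal M|\le C_{n-s}^{r-s}-C_{n-r-1}^{r-s}+1$. By Vandermonde's identity this is $s-1$ less than the second term in \eqref{eq:frankl}.
\end{itemize}
Note that this route takes the case $s=1$ as an input rather than proving it.
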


\subsection{Graph interpretation and random subgraphs}

In 2012, A.\,M.~Raigorodskii proposed to study the probabilistic
stability of Theorem~\ref{thA} in the language of random graph theory
(see~\cite{BGPR1,BGPR2}). Consider the graph
$G(n,r,<s)=(V(n,r),E(n,r,s))$, where
\begin{displaymath}
V(n,r)=\bigl\{x\subset[n]\colon |x|=r\bigr\},\qquad
E(n,r,s)=\bigl\{\{x,y\}\colon |x\cap y|<s\bigr\}.
\end{displaymath}
The graph $G(n,r,<s)$ is called a {\it complete distance graph}: if one
identifies the $r$-element subsets with their characteristic vectors
from $\{0,1\}^n$, then the adjacency of a pair of vertices means that
their scalar product (and hence the Euclidean distance between them)
belongs to a prescribed set of values. For $s=1$ the graph $G(n,r,<1)$
is the well-known Kneser graph. Graphs of this kind arise naturally in
combinatorial geometry --- in problems on chromatic numbers of spaces
and on the Borsuk problem (see the surveys
\cite{Rai01,Rai5,Rai6,Rai7,Rai8}).

Along with $G(n,r,<s)$, a closely related model is actively studied, in
which the edges are generated by the {\it exact} equality of the scalar
product of the characteristic vectors to a prescribed value: the graph
$G(n,r,s)$ has the same vertex set $V(n,r)$, and its edges join the
pairs of $r$-sets intersecting in exactly $s$ elements. Such graphs are
called {\it Johnson graphs}; for $s=0$ one again obtains the Kneser
graph. The independence numbers, clique numbers and chromatic numbers of
Johnson graphs, as well as the behaviour of these characteristics for
their random subgraphs, are the subject of a large series of papers; a
detailed survey of the current state of this theory is given in the
recent paper \cite{RaiSin} (see especially \S\,3.3, where the
stability of the independence number is treated separately in the
regimes $r>2s+1$ and $r\le2s+1$). Note that in the model of exact
equality even the case of fixed $r$ and $s=1$ was refined only very
recently \cite{Koshelev}. In what follows we work with the model
of strict inequality $G(n,r,<s)$.

Recall that the {\it independence number} $\alpha(G)$ of a graph
$G=(V,E)$ is the maximum cardinality of a set of vertices pairwise not
joined by edges (an independent set). The independent sets of the graph
$G(n,r,<s)$ are exactly the $s$-intersecting families, so that
\begin{displaymath}
\alpha\bigl(G(n,r,<s)\bigr)=m(n,r,s),
\end{displaymath}
and for $n\ge n_0(r,s)$ Theorem~\ref{thA} gives
$\alpha(G(n,r,<s))=C_{n-s}^{r-s}$.

Let now $p=p(n)\in[0,1]$. Consider the {\it random subgraph}
$G_p(n,r,<s)$ --- a random element taking values in the set of all
spanning subgraphs $G=(V(n,r),E)$ of the graph $G(n,r,<s)$, distributed
according to the binomial law
\begin{displaymath}
\mathsf{P}\bigl(G_p(n,r,<s)=G\bigr)=p^{|E|}(1-p)^{|E(n,r,s)|-|E|}.
\end{displaymath}
In other words, each edge of the graph $G(n,r,<s)$ survives
independently of the others with probability $p$. Since the independence
number does not decrease when one passes to a spanning subgraph, one
always has $\alpha(G_p(n,r,<s))\ge\alpha(G(n,r,<s))$. The main question
is to find conditions on $p$ under which, with probability
tending to~1,
\begin{equation}
\label{eq:stability}
\alpha\bigl(G_p(n,r,<s)\bigr)=C_{n-s}^{r-s},
\end{equation}
i.e., the independence number {\it does not increase even after the
deletion of a substantial part of the edges}. It is natural to call the
property \eqref{eq:stability} the stability of the
Erd\H{o}s--Ko--Rado theorem.

Note that a probabilistic setting close in spirit --- on the maximum
$s$-intersecting subfamilies of a random family of $r$-element sets ---
was considered earlier by Balogh, Bohman and Mubayi \cite{BBM}; this
circle of questions was actively developed later as well
(see~\cite{BDDLS}).

The first result on stability in the sense of \eqref{eq:stability} was
obtained by L.\,I.~Bogolubsky, A.\,S.~Gusev, M.\,M.~Pyaderkin and
A.\,M.~Raigorodskii \cite{BGPR1,BGPR2}.

\begin{thmA}[see \cite{BGPR1,BGPR2}]
\label{thC}
Let $r$ be fixed and let $p\in(0,1)$ be fixed. Then there exists a
function $f(n)\to 0$ such that
\begin{displaymath}
\mathsf{P}\Bigl(\bigl|\alpha\bigl(G_{p}(n,r,<1)\bigr)-C_{n-1}^{r-1}\bigr|
<f(n)\,C_{n-1}^{r-1}\Bigr)\to 1,\qquad n\to\infty.
\end{displaymath}
\end{thmA}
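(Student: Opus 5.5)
The lower bound is immediate: $G_p(n,r,<1)$ is a spanning subgraph of the Kneser graph $G(n,r,<1)$, so $\alpha(G_p(n,r,<1))\ge C_{n-1}^{r-1}$ deterministically. It therefore suffices to prove the upper bound: for a suitable $\varepsilon=\varepsilon(n)\to0$, with probability tending to~1 no family $S\subset V(n,r)$ with $|S|=\lceil(1+\varepsilon)C_{n-1}^{r-1}\rceil$ is independent in $G_p(n,r,<1)$. Throughout I take $r\ge2$; for $r=1$ the graph $G(n,1,<1)$ is complete, $G_p$ is the binomial graph $G(n,p)$, and $\alpha\approx 2\log_{1/(1-p)}n$, which is not $(1+o(1))\cdot 1$, so the statement is meant for $r\ge2$. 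The approach is a first-moment union bound over all such $S$, combined with a supersaturation estimate saying that every such $S$ spans many edges of $G(n,r,<1)$.

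\emph{Step 1 (supersaturation via spectra).} Write $|V|=C_n^r$ and $D=C_{n-r}^{r}$ for the degree of the Kneser graph; its smallest eigenvalue is $\lambda_{\min}=-C_{n-r-1}^{r-1}=-\frac{r}{n-r}D$ (Lovász). For a $D$-regular graph with least eigenvalue $\lambda_{\min}$ and any $S\subset V$, the standard quadratic-form argument (Hoffman's bound) gives
\begin{displaymath}
2e(S)\ \ge\ |S|\Bigl(\frac{D|S|}{|V|}+\lambda_{\min}\Bigl(1-\frac{|S|}{|V|}\Bigr)\Bigr).
\end{displaymath}
For $|S|=(1+\varepsilon)C_{n-1}^{r-1}$ we have $|S|/|V|=(1+\varepsilon)r/n$. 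Substituting, the bracket equals
\begin{displaymath}
\frac{Dr}{n}\Bigl(\varepsilon-O\bigl(\tfrac{r}{n}\bigr)\Bigr).
\end{displaymath}
Provided $\varepsilon\gg 1/n$, this yields $e(S)\ge c\,\varepsilon\,|S|\,Dr/n\asymp \varepsilon\,|S|\,n^{r-1}$, with a constant $c>0$ depending only on $r$. This step is where I expect the only real care to be needed: the lower-order terms in the eigenvalue comparison must be checked so that the $\varepsilon$-term genuinely dominates.

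\emph{Step 2 (union bound).} The number of choices of $S$ is at most
\begin{displaymath}
C_{|V|}^{|S|}\le |V|^{|S|}\le \exp\bigl(r|S|\ln n\bigr).
\end{displaymath}
For a fixed $S$, the probability that none of its $e(S)$ edges survive is $(1-p)^{e(S)}\le \exp(-p\,e(S))$. Hence the probability that some such $S$ is independent in $G_p$ is at most
\begin{displaymath}
\exp\bigl(|S|\,(r\ln n-c\,p\,\varepsilon\, n^{r-1})\bigr).
\end{displaymath}
Since $p$ is fixed and $r\ge2$, choosing for instance $\varepsilon=n^{-1/2}$ makes $p\,\varepsilon\,n^{r-1}\ge p\,n^{1/2}\gg\ln n$, so this bound tends to~0. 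This gives the theorem with $f(n)=2n^{-1/2}$ (any $\varepsilon\to0$ with $\varepsilon n^{r-1}/\ln n\to\infty$ works).

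The main obstacle is Step 1: obtaining a supersaturation bound strong enough to beat the entropy factor $\exp(r|S|\ln n)$. The crude count of sets is affordable here only because the Kneser graph is dense, with degree of order $n^r$ while $|S|$ is of order $n^{r-1}$. The spectral (Hoffman) bound supplies this, and fixing $p$ and $r$ leaves plenty of room.
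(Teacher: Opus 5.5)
Your proof is correct. The paper gives no proof of Theorem~\ref{thC}; it only quotes it from the literature. So the natural comparison is with the first-moment scheme it uses for Theorem~\ref{th:main}.

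Both arguments bound the probability of a large independent set by (number of candidate sets)$\,\times\,e^{-p\,e(S)}$. They differ in where the supersaturation comes from. The paper obtains it combinatorially: Tur\'an's bound \eqref{eq:turan} when $\alpha(A)$ is small, and otherwise the Frankl--Ahlswede--Khachatrian stability theorem plus the neighbour count of Lemma~\ref{lem:star}. For $\alpha(A)>M-M'$ it also takes the union over triples $(W,B,u)$ rather than over all sets $A$. You get supersaturation in one stroke from Hoffman's quadratic-form bound, which works so well because the Kneser graph's ratio bound is tight exactly at $C_{n-1}^{r-1}$.

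In fact Step~1 needs no extra care. With $\lambda_{\min}=-\frac{r}{n-r}D$ the bracket is exactly
\[
\frac{D}{n-r}\Bigl(n\frac{|S|}{|V|}-r\Bigr)=\frac{Dr\varepsilon}{n-r}=\varepsilon\,C_{n-r-1}^{r-1},
\]
with no $O(r/n)$ loss, so you can drop the proviso $\varepsilon\gg1/n$. As written, that proviso would not support your parenthetical claim for $r\ge3$, where $\varepsilon n^{r-1}/\ln n\to\infty$ allows $\varepsilon\ll1/n$.

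For $|S|=C_{n-1}^{r-1}+k$ this reads $e(S)\ge(1-o(1))\,k|S|/2$. Your union bound therefore gives $\alpha(G_p)\le C_{n-1}^{r-1}+O(\ln n/p)$ with probability tending to~1. It also gives approximate stability whenever $p\,C_{n-1}^{r-1}/\ln n\to\infty$, which is much more than Theorem~\ref{thC} asserts.

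What this route cannot give is exact equality. At $k=1$ the available lower bound on $e(S)$ is only linear in $|S|$ (of order $|S|$, attained by a star plus one vertex). Choosing $S$ among all $r$-sets, however, costs entropy of order $|S|\ln(n/r)$. That is why the paper's $P_3$ step must exploit the star structure and count only triples $(W,B,u)$.

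Your caveat $r\ge2$ is right. For $r=1$ the graph $G_p(n,1,<1)$ is $G(n,p)$, whose independence number is about $2\log_{1/(1-p)}n$ rather than $1+o(1)$, so the statement as quoted tacitly assumes $r\ge2$.
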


Then Bollob\'as, Narayanan and Raigorodskii \cite{BNR} found, for $s=1$,
the sharp threshold probability separating stability from instability,
moreover for growing $r=r(n)$.

\begin{thmA}[Bollob\'as, Narayanan, Raigorodskii \cite{BNR}]
\label{thD}
Put
\begin{displaymath}
p_c(n,r)=\frac{(r+1)\ln n-r\ln r}{C_{n-1}^{r-1}}.
\end{displaymath}
Then for any $r=r(n)$ such that $r\ge2$ and $r=o(n^{1/3})$, and any
fixed $\varepsilon>0$,
\begin{displaymath}
\mathsf{P}\bigl(\alpha(G_p(n,r,<1))=C_{n-1}^{r-1}\bigr)\to
\begin{cases}
1, & p\ge(1+\varepsilon)p_c(n,r),\\
0, & p\le(1-\varepsilon)p_c(n,r),
\end{cases}
\qquad n\to\infty.
\end{displaymath}
\end{thmA}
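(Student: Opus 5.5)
Theorem~\ref{thD} is quoted from~\cite{BNR}, and the authors do not reprove it; here is how I would approach it. Fix a star $\mathcal S_i=\{x\colon i\in x\}$ and write $N=C_{n-r-1}^{r-1}$. This is the number of members of $\mathcal S_i$ disjoint from a fixed $r$-set $b\not\ni i$. Since $r=o(n^{1/3})$, we have $N=(1+o(1))\,C_{n-1}^{r-1}$. The basic obstruction to stability is a single set $b\not\ni i$ none of whose $N$ edges to $\mathcal S_i$ survives. Then $\mathcal S_i\cup\{b\}$ is independent, and $\alpha(G_p(n,r,<1))>C_{n-1}^{r-1}$.

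\emph{0-statement.} Let $X$ be the number of pairs $(i,b)$ with $b\not\ni i$ such that all $N$ edges between $b$ and $\mathcal S_i$ are deleted. Then
\begin{displaymath}
\mathsf E X=n\,C_{n-1}^{r}(1-p)^{N}.
\end{displaymath}
Taking logarithms gives $\ln\mathsf E X=\ln n+\ln C_{n-1}^r-(1+o(1))pN$. Since $\ln(n\,C_{n-1}^r)=(r+1)\ln n-r\ln r+O(r)$, we get $\mathsf E X\to\infty$ for $p\le(1-\varepsilon)p_c$. I would then show $\mathsf{Var}\,X=o((\mathsf EX)^2)$. Two pairs $(i,b)$ and $(i',b')$ share edges only if the corresponding sets of potential edges overlap. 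That overlap is a small fraction of $N$ unless $i=i'$ and $|b\cap b'|$ is large, so the covariance terms are controlled by a routine count. Chebyshev's inequality then gives $X>0$ with high probability.

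\emph{1-statement.} Let $\mathcal F$ be independent in $G_p(n,r,<1)$ with $|\mathcal F|\ge C_{n-1}^{r-1}$. Choose $i$ to be the most popular element of $\mathcal F$. Put $\mathcal B=\mathcal F\setminus\mathcal S_i$, $k=|\mathcal B|$, and $\mathcal D=\mathcal S_i\setminus\mathcal F$. Then $|\mathcal D|\le k$, and every Kneser edge between $\mathcal B$ and $\mathcal S_i\setminus\mathcal D$ must be deleted. I would split into ranges of $k$.

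\emph{Small $k$ (say $k\le N^{1/2}$).} Each $b\in\mathcal B$ has at least $N-k$ neighbours in $\mathcal S_i\setminus\mathcal D$, and distinct $b$'s give distinct edges. So at least $k(N-k)$ edges must be deleted. A union bound over the choices of $i$, $\mathcal B$ and $\mathcal D$ gives
\begin{displaymath}
\sum_k \bigl(n\,C_{n-1}^{r}\bigr)^{k}\,\bigl(kN\bigr)^{k}\,e^{-pk(N-k)}.
\end{displaymath}
The factor $(kN)^k$ counts the choices of $\mathcal D$ among neighbours of $\mathcal B$; members of $\mathcal D$ that are not neighbours of $\mathcal B$ may be put back into $\mathcal F$ without loss. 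The $k=1$ term is essentially the first-moment quantity above, with an extra factor $N$. This forces the precise constant, and it tends to zero when $p\ge(1+\varepsilon)p_c$ because $\ln N=O(r\ln n)$ is absorbed by the $\varepsilon$-slack. I would need to check this carefully: the $\ln N$ term is of the same order as $p_cN$ itself, so it may instead be necessary to handle $|\mathcal D|=|\mathcal B|$ exactly, where equality and not strict excess is the issue.

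\emph{Large $k$.} This is the main obstacle. Here Theorem~\ref{thB} with $s=1$ (Hilton--Milner) enters. A family not contained in a star has at most about $rC_{n-2}^{r-2}=O(r^2/n)\,C_{n-1}^{r-1}$ members if it is intersecting. Therefore a large $\mathcal F$ must contain many Kneser edges: a supersaturation statement, giving roughly $\Omega(k\,N)$ disjoint pairs once $k$ is large. The trouble is that a naive union bound over all $\mathcal F$ of size $C_{n-1}^{r-1}$ is far too costly. So I would use either the graph container method for the Kneser graph, or the BNR-style argument: compress $\mathcal B$ by its shadow, and count edges from $\mathcal B$ to $\mathcal S_i\setminus\mathcal D$ via a Kruskal--Katona-type expansion bound $e(\mathcal B,\mathcal S_i\setminus\mathcal D)\ge c\,kN$. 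With that bound, the cost $\binom{C_n^r}{k}^2$ is beaten by $e^{-ckNp}$, since $pN\gg r\ln n$.
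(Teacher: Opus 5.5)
The paper quotes Theorem~\ref{thD} from Bollob\'as--Narayanan--Raigorodskii without proof, so your outline has to stand on its own.

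\textbf{The $0$-statement.} This part is fine. It is even easier than you suggest. For $i=i'$, $b\ne b'$ the two edge sets are disjoint, so those events are independent. The only real overlaps are $b=b'$, $i\ne i'$, which share $C_{n-r-2}^{r-2}\approx rN/n$ edges, and the single shared edge $\{b,b'\}$ when $i'\in b$, $i\in b'$, $b\cap b'=\emptyset$.

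\textbf{The $1$-statement, small $k$.} Here the argument breaks down. A violation means $|\mathcal F|\ge C_{n-1}^{r-1}+1$, so the correct constraint is $|\mathcal D|\le k-1$. That settles $k=1$, where the term is exactly $\mathsf{E}X\to0$, and nothing more.

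For $k\ge2$, choosing $\mathcal D$ costs about $(k-1)\ln N=(1+o(1))(k-1)(r-1)\ln(n/r)$. This is comparable to the whole exponent $pkN=(1+\varepsilon+o(1))\,k\ln(nC_{n-1}^r)$, while the slack is only about $\varepsilon k\ln(nC_{n-1}^r)+(k-1)\ln n$. Concretely, the $k=2$, $|\mathcal D|=1$ contribution is of order $(N/n)(nC_{n-1}^r)^{-2\varepsilon+o(1)}$. This tends to infinity when $r\to\infty$ and $\varepsilon<1/2$.

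This is not slack in your estimates: the expected number of such triples $(i,\mathcal B,\mathcal D)$ really diverges. Once both members of $\mathcal B$ have lost all $N$ edges to $\mathcal S_i$, each of the roughly $N$ common neighbours can serve as $\mathcal D$.

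The missing idea is your own ``put back'' remark, applied to $G_p$-neighbours instead of $G$-neighbours.
\begin{itemize}
\item Among all independent sets $\mathcal F$ of size $C_{n-1}^{r-1}+1$ and all $i$, choose a pair minimising $k$.
\item Then every $d\in\mathcal D$ has a surviving edge into $\mathcal B$. Otherwise $(\mathcal F\cup\{d\})\setminus\{b\}$, with $b\in\mathcal B$, would be independent and have smaller $k$.
\item These edges are disjoint from the deleted ones and from each other. So each element of $\mathcal D$ costs a factor at most $kp$, and
\begin{displaymath}
n\binom{C_{n-1}^r}{k}\sum_{j<k}\binom{kN}{j}(kp)^j(1-p)^{k(N-j)}
\le n\,\bigl(eC_{n-1}^r\bigr)^k(2epN)^{k-1}e^{-pk(N-k)} .
\end{displaymath}
\item The powers of $k$ cancel. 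At $p=(1+\varepsilon)p_c$, $\varepsilon\le1$, the right-hand side is at most $\bigl[2e^2pN\,(nC_{n-1}^r)^{-\varepsilon/2}\bigr]^k$ for $k\le\varepsilon N/8$, and this sums to $o(1)$.
\end{itemize}

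\textbf{The $1$-statement, large $k$.} This part is not yet an argument, and its quantitative premise is false. At the threshold $pN=(1+\varepsilon+o(1))\bigl((r+1)\ln n-r\ln r\bigr)=O(r\ln n)$, not $\gg r\ln n$. So the gain per member of $\mathcal B$ beats its cost only by the factor $1+\varepsilon$. A bound $e\ge ckN$ with $c<1$ cannot beat $\binom{C_n^r}{k}^2$ unless $2\ln(eC_n^r/k)<c\,pN$.

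For $k\ge\varepsilon N/8$, choosing $(\mathcal B,\mathcal D)$ costs only $e^{k(\ln(n/r)+O_\varepsilon(1))}$. So the trivial bound $e\ge k(N-k)$ still works while $N-k\ge\theta N/r$, for a fixed $\theta<1$ with $\theta(1+\varepsilon)>1$.

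Beyond that point your cross-edge bound $e(\mathcal B,\mathcal S_i\setminus\mathcal D)\ge ckN$ fails. For a typical family of size $C_{n-1}^{r-1}+1$, the most popular element lies in only about $rC_{n-1}^{r-1}/n$ members. Then $|\mathcal S_i\setminus\mathcal D|$ is that small, and there are only $o(kN)$ cross edges. Such families are killed only by the disjoint pairs \emph{inside} $\mathcal B$. So you need supersaturation for $e_G(\mathcal F)$ itself; Hilton--Milner alone gives far too few disjoint pairs, and naming containers or Kruskal--Katona does not supply it.

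Here is what that regime looks like. The popularity of $i$ caps every degree of $\mathcal F$ at $C_{n-1}^{r-1}-N+\theta N/r+1=(\theta+o(1))\,C_{n-1}^{r-1}/r$. This is where $r=o(n^{1/3})$ naturally enters, because $C_{n-1}^{r-1}-N\approx(r^2/n)\,C_{n-1}^{r-1}$. Bounding the intersecting pairs by $\sum_x\binom{d_x}{2}$ then gives $e_G(\mathcal F)\ge\frac{1-\theta-o(1)}{2}|\mathcal F|^2$. This closes a crude union bound over all $\mathcal F$, at least when $r\to\infty$.

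Your outline never isolates this regime and never uses the hypothesis $r=o(n^{1/3})$.
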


The restriction $r=o(n^{1/3})$ in Theorem~\ref{thD} was subsequently
removed completely. S.~Das and T.~Tran \cite{DasTran} found the {\it
sharp} threshold probability (i.e., a function $p_0$ such that for
$p\ge(1+\varepsilon)p_0$ the equality \eqref{eq:stability} holds with
probability tending to~$1$, and for $p\le(1-\varepsilon)p_0$ with
probability tending to~$0$; see Theorem~\ref{thBKL}) for $r\le n/C$,
where $C$ is some absolute constant, and for $r\le(1/2-\gamma)n$ with
any fixed $\gamma>0$ they determined it up to a constant factor
depending on~$\gamma$. P.~Devlin and J.~Kahn \cite{DevlinKahn} got rid
of the dependence of this factor on~$\gamma$, having found the threshold
probability up to order of magnitude for all $n\ge2r+2$ (which was a new
result for $r\sim n/2$), and showed that for $n=2r+1$ there exists a
constant $p<1$ such that with probability tending to~$1$ all independent
sets of maximum cardinality in $G_p(n,r,<1)$ are stars. Finally,
J.~Balogh, R.~Krueger and H.~Luo \cite{BKL} found the sharp threshold
probability in the whole range $n\ge2r+1$, thereby completing the Kneser
case.

\begin{thmA}[Balogh, Krueger, Luo \cite{BKL}]
\label{thBKL}
Put
\begin{displaymath}
p_0(n,r)=
\begin{cases}
3/4, & n=2r+1,\\[2mm]
\displaystyle\frac{\ln\bigl(n\,C_{n-1}^{r}\bigr)}{C_{n-r-1}^{r-1}},
& n>2r+1.
\end{cases}
\end{displaymath}
Let $r\ge2$ and let $\varepsilon>0$ be fixed. Then for $n\ge2r+1$ and
$p\ge(1+\varepsilon)\,p_0(n,r)$, with probability tending to~$1$, every
independent set of maximum cardinality in the graph $G_p(n,r,<1)$ is a
star; in particular, $\alpha(G_p(n,r,<1))=C_{n-1}^{r-1}$. For $n=2r+1$
the sharp threshold probability is thus the constant $p_0=3/4$.
\end{thmA}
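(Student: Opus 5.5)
Since Theorem~\ref{thBKL} is quoted from \cite{BKL}, I only sketch how I would prove it. The plan is to split the bad event, namely that some independent set $\mathcal{F}$ of $G_p(n,r,<1)$ with $|\mathcal{F}|\ge C_{n-1}^{r-1}$ is not a star, according to how close $\mathcal{F}$ is to a star. For $n>2r+1$ I fix a star $\mathcal{S}_i=\{x\colon i\in x\}$ and write $\mathcal{F}=(\mathcal{S}_i\setminus\mathcal{D})\cup\mathcal{A}$, where $\mathcal{A}$ consists of sets avoiding $i$ and $|\mathcal{D}|\le|\mathcal{A}|=k$. The constant $p_0$ comes from the case $k=1$. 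There are $n$ choices of $i$ and $C_{n-1}^{r}$ choices of $A\not\ni i$. Each such $A$ is disjoint from exactly $C_{n-r-1}^{r-1}$ members of $\mathcal{S}_i$, and at least one of these must be kept while its edge to $A$ is deleted. The expected number of such configurations is essentially $n\,C_{n-1}^{r}(1-p)^{C_{n-r-1}^{r-1}}$, which tends to $0$ exactly when $p\ge(1+\varepsilon)p_0$.

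\emph{Near-star regime} (small $k$, say $k\le C_{n-1}^{r-1}/\mathrm{poly}(n)$). I would run a union bound over $(i,\mathcal{A},\mathcal{D})$. Each $A\in\mathcal{A}$ must have all of its $G$-neighbours in $\mathcal{S}_i\setminus\mathcal{D}$ deleted. Every $A$ has degree $C_{n-r-1}^{r-1}$ into $\mathcal{S}_i$, and a small $\mathcal{D}$ can cover only a small fraction of these neighbourhoods, so the number of edges that must be missing is at least $k\,C_{n-r-1}^{r-1}(1-o(1))$. The entropy term $\binom{C_{n-1}^{r}}{k}\binom{C_{n-1}^{r-1}}{\le k}$ is then beaten by $(1-p)^{k C_{n-r-1}^{r-1}(1-o(1))}$. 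The delicate point is to show, by a Kruskal--Katona / Hilton--Milner-type expansion inequality for the bipartite Kneser graph between $\binom{[n]\setminus\{i\}}{r}$ and $\mathcal{S}_i$, that $\mathcal{D}$ cannot absorb a large share of these edges unless $|\mathcal{D}|>k$. This is where the $(1+\varepsilon)$ slack is spent.

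\emph{Far-from-star regime.} Here I would use supersaturation combined with the hypergraph container method. By a stability/supersaturation result of Das--Gan--Sudakov type, a family of size $\ge C_{n-1}^{r-1}$ with at least $k$ sets outside every star spans $\Omega\bigl(k\,C_{n-r-1}^{r-1}\bigr)$ disjoint pairs. The graph container lemma then gives a small collection of containers. Each independent set in $G(n,r,<1)$ lies in some container, and each container is either close to a star or has size $<C_{n-1}^{r-1}$. For a container close to a star, a large family inside it forces many edges of $G$ that all must be deleted, and the probability bound $(1-p)^{\Omega(kC_{n-r-1}^{r-1})}$ with $p\gg \ln n/C_{n-r-1}^{r-1}$ beats the container count. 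I expect the main obstacle to be getting the supersaturation bound sharp enough, with linear dependence on $k$ and the exact degree $C_{n-r-1}^{r-1}$, uniformly for all $r$ up to $n/2$.

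\emph{The case $n=2r+1$.} Here $C_{n-r-1}^{r-1}=1$: each $r$-set avoiding $i$ is disjoint from exactly one member of $\mathcal{S}_i$, so the entropy argument above breaks down. I would instead use a local swapping analysis. For each $A\not\ni i$, its unique neighbour $B\in\mathcal{S}_i$, together with the structure of the remaining edges among the non-star sets, determines when $\mathcal{S}_i$ can be improved. A union bound over these local obstructions yields a constant threshold, which an explicit computation identifies as $3/4$.
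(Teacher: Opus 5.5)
The paper gives no proof of this statement; it only quotes it from Balogh, Krueger and Luo. Your sketch therefore has to stand on its own, and as written it has genuine gaps.

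\textbf{Near-star regime.} Paying $\binom{C_{n-1}^{r-1}}{\le k}$ for the removed family $\mathcal D$ cannot be absorbed by the $(1+\varepsilon)$ slack. Take $n\ge 2r+2$ and put $N=nC_{n-1}^r$, $d=C_{n-r-1}^{r-1}$. At $p=(1+\varepsilon)p_0$, already the $k=1$ term of your bound is at least $N\,C_{n-1}^{r-1}(1-p)^{d}=C_{n-1}^{r-1}N^{-\varepsilon-o(1)}$. Since $N\le n^2C_{n-1}^{r-1}$ and $N\ge n^3/3$, this diverges for every $\varepsilon<1/3$.

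Restricting $\mathcal D$ to $G$-neighbours of $\mathcal A$ is still not enough. The $k=1$ term becomes about $d\,N^{-\varepsilon}$, and $d=N^{(r-1)/(r+1)+o(1)}$ for fixed $r$, so the term diverges for $\varepsilon<(r-1)/(r+1)$.

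The missing idea is maximality. In a \emph{maximum} family, every $B\in\mathcal D$ must be joined in $G_p$ to some member of $\mathcal A$, since otherwise $B$ could be put back. So for fixed $i$ and $\mathcal A$ the bad event is $|N_{G_p}(\mathcal A)\cap\mathcal S_i|\le|\mathcal A|$. The events $\{B\in N_{G_p}(\mathcal A)\}$ are independent over $B$, and each has probability at most $p\,d_{\mathcal A}(B)$. Summing over $\mathcal D$ therefore costs only about $(e\,p\,d)^k=(O(\ln N))^k$, rather than $\binom{C_{n-1}^{r-1}}{k}$. This factor $p$ per removed set is exactly what keeps an extra $\ln d$ out of the threshold.

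Your $k=1$ sentence also has the quantifier backwards: all but at most one of the $d$ neighbours of $A$ must be kept, with their edges to $A$ absent.

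In the far-from-star regime there is a further problem. The families you must control are independent in $G_p$, not in $G$; those independent in $G$ are already handled deterministically by Erd\H{o}s--Ko--Rado and Hilton--Milner. So containers for the independent sets of $G$ do not contain the families that matter. You need containers for $G_p$, together with a proof that with high probability every container spans few edges of $G$.

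\textbf{The case $n=2r+1$.} Here $C_{n-r-1}^{r-1}=C_r^{r}{}^{-1}$ is not $1$: in fact $C_{n-r-1}^{r-1}=C_r^{r-1}=r$. An $r$-set $A\not\ni i$ is disjoint from the $r$ sets $\{i\}\cup(C\setminus\{c\})$ with $c\in C:=[n]\setminus(A\cup\{i\})$. The value $1$ occurs only for $n=2r$, where $G(n,r,<1)$ is a perfect matching. So your local-swapping picture describes the wrong graph.

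The constant $3/4$ comes from the same $k=1$ first moment as before. Indeed $(2r+1)C_{2r}^r(1-p)^r=r^{O(1)}\bigl(4(1-p)\bigr)^r$, which tends to $0$ if and only if $p>3/4$. The general formula gives $\ln(nC_{n-1}^r)/C_{n-r-1}^{r-1}\to\ln 4>1$ here only because, for constant $p$, one must use $\ln\frac{1}{1-p}$ in place of $p$, and $1-e^{-\ln 4}=3/4$.

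The real difficulty at $n=2r+1$ is the balance between entropy and forced non-edges. Each set placed outside the star costs entropy about $r\ln 4$, while forcing at most $r$ non-edges, each worth only $\ln\frac{1}{1-p}\approx\ln 4$. So there is no room for a constant-factor loss in the number of forced non-edges, such as the one in your supersaturation bound $\Omega(k\,C_{n-r-1}^{r-1})$, unless it is matched by an equal loss in entropy. Your sketch gives no mechanism for this.

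Finally, calling $3/4$ a \emph{sharp} threshold also requires the $0$-statement for $p\le 3/4-\varepsilon$. That needs a second-moment argument on the number of pairs $(i,A)$ for which $A$ has no $G_p$-neighbour in $\mathcal S_i$; a union bound cannot supply it.
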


Note that for $r=o(\sqrt n\,)$ the quantity $p_0(n,r)$ asymptotically
coincides with $p_c(n,r)$ from Theorem~\ref{thD}, so that
Theorem~\ref{thBKL} extends Theorem~\ref{thD} to the whole domain
$n\ge2r+1$, including the case when $r$ grows linearly in~$n$.

Let us return to the general case $s\ge1$. M.\,M.~Pyaderkin
\cite{Pyad15} proved stability for arbitrary fixed $r>s$ and $p=1/2$:
\begin{displaymath}
\mathsf{P}\bigl(\alpha(G_{1/2}(n,r,<s))=C_{n-s}^{r-s}\bigr)\to 1,\qquad
n\to\infty;
\end{displaymath}
for the Johnson graphs $G(n,r,s)$ with fixed $r>2s+1$, the threshold
probability for the stability of the independence number was found up to
order of magnitude in \cite{Pyad19}.

For the graph $G(n,r,<s)$ with arbitrary fixed $r$ and $s$, a complete
picture analogous to Theorem~\ref{thD} was obtained by P.\,A.~Ogarok and
A.\,M.~Raigorodskii \cite{OgRai}.

\begin{thmA}[Ogarok, Raigorodskii \cite{OgRai}]
\label{thOg}
Let $r>s\ge1$ be fixed positive integers and let $\varepsilon>0$ be
fixed. Put
\begin{displaymath}
p_1(n)=\frac{2s\,C_r^s\,\ln n}{C_{n-s}^{r-s}},\qquad
p_2(n)=\frac{(1-\varepsilon)(r+s)\ln n}{C_{n-s}^{r-s}}.
\end{displaymath}
Then for $p=p_1(n)$, with probability tending to~$1$, one has
$\alpha(G_p(n,r,<s))=C_{n-s}^{r-s}$, and moreover all independent sets
of maximum cardinality are stars, while for $p=p_2(n)$, with probability
tending to~$1$, one has $\alpha(G_p(n,r,<s))\ge C_{n-s}^{r-s}+1$.
(The first assertion is proved in \cite{OgRai} under the additional
restriction $r>3$; by monotonicity it is also true for all
$p\ge p_1(n)$, and the second one for all $p\le p_2(n)$.)
\end{thmA}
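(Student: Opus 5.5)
The plan is to prove the two assertions separately. The first is a first-moment (union bound) argument over all ways an independent set could beat a star. The second is a second-moment argument over a concrete "star with one vertex swapped" configuration. Throughout, $r$ and $s$ are fixed, and $N:=C_{n-s}^{r-s}\asymp n^{r-s}$. Write $S_T$ for the star of an $s$-set $T$.

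For the upper assertion, let $A$ be an independent set of $G_p(n,r,<s)$ with $|A|\ge N$ that is not a star. Choose $T$ maximising $|A\cap S_T|$ and put $B=A\setminus S_T$, $k=|B|\ge1$ and $D=S_T\setminus A$. Since $|A|\ge N$, we have $|D|\le k$.

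First, a vertex $y$ with $|y\cap T|<s$ is joined in $G(n,r,<s)$ to all but $O(N/n)$ members of $S_T$. Indeed, $x\supset T$ fails to be adjacent to $y$ only if $x\setminus T$ meets $y\setminus T$, a proportion $O(r^2/n)$ of the star. Hence the number of $G$-edges between $B$ and $A\cap S_T$ is at least $k(N-O(N/n)-k)$.

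For $k\le \delta N$ with a small fixed $\delta$, I will take a union bound over the choice of $T$ (at most $n^s$ ways), of $B$ (at most $n^{rk}$ ways) and of $D$ (at most $N^k\le n^{(r-s)k}$ ways). This gives a probability at most
\begin{displaymath}
\sum_k n^{s}\,n^{(2r-s)k}\exp\bigl(-(1-2\delta)k\,p_1N\bigr)
=\sum_k n^{s+(2r-s)k-2(1-2\delta)s C_r^s k}.
\end{displaymath}
This tends to $0$ as soon as $2sC_r^s>2r$ with room to spare. The condition $r>3$ enters exactly in checking this numerical inequality, together with the edge cases $s=1$ and $r$ small.

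For $k>\delta N$, I will use Theorem~\ref{thB} as a supersaturation input. The maximality of $T$ forces $A$ to be $\Omega(1)$-far from every star, and $|A|\ge N$ exceeds the Frankl/Ahlswede--Khachatrian bound \eqref{eq:frankl}, which is $O(N/n)$. A removal-type averaging then shows that $A$ contains at least $cN^2$ pairs of $G$-adjacent sets. Concretely, I would apply Theorem~\ref{thB} to random subfamilies of $A$ of a fixed size, or iterate it by greedily deleting sets involved in conflicts. The number of candidate families $A$ is at most $C_{C_n^r}^{|A|}\le e^{O(N\ln n)}$, while the probability of no surviving edge among them is $e^{-cpN^2}=e^{-c'N\ln n\cdot 2sC_r^s}$. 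This term is the main obstacle. The constants must be arranged so that the supersaturation constant beats the entropy $O(N\ln n)$. If the crude count fails, I would first try a containers/graph-container style argument, i.e.\ encoding $A$ by a small fingerprint of size $O(N/\ln n)$, to reduce the entropy. The same estimates give that every maximum independent set is a star.

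For the lower assertion at $p=p_2$, I will exhibit with high probability an independent set of size $N+1$. Call a pair $(T,y)$ with $|y\cap T|=s-1$ good if, in $G_p$, the vertex $y$ has exactly one neighbour $x$ in $S_T$. In that case $(S_T\setminus\{x\})\cup\{y,\ y'\}$ cannot be used directly, so instead I require $y$ to have no $G_p$-neighbour in $S_T$. Then $S_T\cup\{y\}$ is independent, of size $N+1$. The number of such pairs is $X$, with
\begin{displaymath}
\mathsf{E}X\asymp n^{s}n^{r}(1-p)^{N(1-o(1))}\ge n^{r+s-(1-\varepsilon)(r+s)+o(1)}\to\infty.
\end{displaymath}
Finally, I will show $\mathrm{Var}X=o((\mathsf{E}X)^2)$. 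Pairs $(T,y),(T',y')$ share $G$-edges only when $y=y'$ or the stars overlap substantially, and a routine count shows that these correlated pairs contribute $o((\mathsf{E}X)^2)$. Chebyshev's inequality then gives $X>0$ with high probability, which yields $\alpha(G_p(n,r,<s))\ge N+1$.
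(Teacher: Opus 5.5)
The paper does not prove this theorem; it only quotes it from Ogarok--Raigorodskii. Your proposal therefore has to stand on its own, and as written the upper assertion has a genuine gap in the regime $k>\delta N$.

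The tools you name do not give supersaturation. Theorem~\ref{thB} says nothing about subfamilies of bounded size, since any bounded family can be $s$-intersecting without lying in a star. Deleting one endpoint of each conflicting pair can remove up to $e(A)$ sets, which is useless once $e(A)\gg N$.

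More seriously, the union bound you set up cannot close for small $\delta$. There are $C_{C_n^r}^{N}=e^{(1+o(1))sN\ln n}$ candidate families. With $p=2sC_r^s\ln n/N$ you would need $e(A)>(1+\epsilon)N^2/(2C_r^s)$ for \emph{every} $A$ in the regime. But take $A=(S_T\setminus D)\cup B$ with $|B|=|D|\approx\delta N$ and $B$ inside a star whose centre is disjoint from $T$; it has at most about $\delta N^2$ edges. So this regime needs $\delta\gtrsim 1/(2C_r^s)$. For $s=1$, however, your small-$k$ estimate $n^{1-k+4\delta rk}$ forces $\delta<1/(8r)=1/(8C_r^1)$ (look at $k=2$), so the two regimes do not meet. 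For $s\ge2$ they meet only if you actually prove $e(A)>(1+\epsilon)N^2/(2C_r^s)$ for all such $A$ at an explicit $\delta$, which is exactly the step left open.

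Here is one way to close it without Theorem~\ref{thB}.
\begin{itemize}
\item Every non-neighbour of $x$ in $G(n,r,<s)$ lies in one of the $C_r^s$ stars $S_W$ with $W\subseteq x$. So for $a:=\max_W|A\cap S_W|$ one has $e(A)\ge\frac{|A|}{2}\bigl(|A|-1-C_r^s a\bigr)$. This beats the full entropy when $a\le\theta N$ and $C_r^s(1-C_r^s\theta)>1$.
\item When $a>\theta N$ and $k\ge\delta N$, encode $A$ by $(W,\,A\cap S_W,\,A\setminus S_W)$. The cost is $n^s2^NC_{C_n^r}^{k}\le e^{(1+o(1))sk\ln n}$, because $k\ge\delta N$. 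Your bound $e(A)\ge k(a-O(N/n))$ then wins once $2C_r^s\theta>1$.
\item The two conditions on $\theta$ are compatible exactly when $C_r^s\ge3$, i.e.\ for all $r\ge3$.
\end{itemize}

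Two further points need repair.

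First, the small-$k$ regime for $s=1$. Here $2sC_r^s=2r$ for every $r$, so your exponent is positive at $k=1$. The hypothesis $r>3$ plays no role in that inequality; for $s\ge2$ it holds automatically, since $sC_r^s\ge2r$. In the case $k=|D|=1$, which is needed for ``every maximum independent set is a star'', even the exact count $C_n^1C_n^rC_{n-1}^{r-1}e^{-pN(1-o(1))}$ is of order $1/(r!(r-1)!)$, not $o(1)$. You must use that the removed vertex $x$ is the unique $G_p$-neighbour of $y$ in $S_T$. This replaces the factor $N$ by $pN=O(\ln n)$.

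Second, the lower bound. Restricting to $|y\cap T|=s-1$ leaves only $\Theta(n^{r-s+1})$ choices of $y$ per $T$. Then $\mathsf{E}X\asymp n^{1-s+\varepsilon(r+s)}\to0$ for $s\ge2$ and small $\varepsilon$. Allow all $y\notin S_T$ (e.g.\ $y\cap T=\emptyset$); this gives the $n^{s+r}$ you actually wrote.

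The second-moment step then works, but the dependent pairs are not ``overlapping stars'': distinct stars share only $O(N/n)$ sets. The dependent pairs are of two kinds. Pairs with $y=y'$ share the $O(N/n)$ edges from $y$ to $S_T\cap S_{T'}$. Pairs with $y\in S_{T'}$ and $y'\in S_T$ share the single edge $yy'$. Both contribute $o((\mathsf{E}X)^2)$.
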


Thus, for fixed $r$ and $s$ the threshold probability for stability has
order $\ln n\big/C_{n-s}^{r-s}$: the upper and lower bounds of
Theorem~\ref{thOg} differ only by the constant factor
$2s\,C_r^s/(r+s)$. In essence, Theorem~\ref{thOg} is a transfer of
Theorem~\ref{thD} to the model $G(n,r,<s)$: the scheme of the proof is
the same, and for $s=1$ both theorems give one and the same threshold up
to the factor $2r/(r+1)$. We emphasize that for fixed $r$ and $s$ the
threshold $\ln n\big/C_{n-s}^{r-s}$ decreases as $n^{-(r-s)}$ up to a
logarithmic factor, where the exponent $r-s$ may be arbitrarily large, whereas
all bounds known for growing parameters decrease more slowly than
$n^{-1}$ (see Theorem~\ref{thE} and Theorem~\ref{th:main} below, as well
as Remark~\ref{zam:ogarok}).

The passage to {\it growing} parameters $r=r(n)$, $s=s(n)$ and
decreasing probability $p=p(n)$ was started by V.\,S.~Karas,
P.\,A.~Ogarok and A.\,M.~Raigorodskii \cite{KOR} and was brought to the
following statement in the paper of A.\,M.~Raigorodskii and
V.\,S.~Karas \cite{RaiKaras}.

\begin{thmA}[Raigorodskii, Karas \cite{RaiKaras}]
\label{thE}
Let $r=r(n)$ and $s=s(n)$ be such that $r-s\ge2$, let the probability
$p=p(n)$ be constant or tend to zero, and let $Q:=1/(-\ln(1-p))$. If
$Qr^3=o(n/\ln n)$, then
\begin{displaymath}
\mathsf{P}\bigl(\alpha(G_p(n,r,<s))=C_{n-s}^{r-s}\bigr)\to 1,\qquad n\to\infty.
\end{displaymath}
(For $r-s\in\{1,2\}$, other conditions on the parameters, in a number of
cases less restrictive, are also given in \cite{RaiKaras}; for $r-s=1$
only they are applicable.)
\end{thmA}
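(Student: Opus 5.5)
Since this is a cited result, I will only outline how I would reprove it. The plan is a first-moment (union-bound) argument organized around the star that best approximates a hypothetical oversized independent set. Suppose $\mathcal M$ is an independent set of $G_p(n,r,<s)$ with $|\mathcal M|=C_{n-s}^{r-s}+1$. Choose an $s$-set $T$ for which the star $S_T=\{x: T\subset x\}$ maximizes $|\mathcal M\cap S_T|$. Write $\mathcal A=\mathcal M\setminus S_T\neq\emptyset$ and $k=|\mathcal A|$. Then $|S_T\setminus\mathcal M|=k-1$.

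\emph{Basic degree estimate.} Any $x\notin S_T$ satisfies $|x\cap T|\le s-1$. I would show that $x$ is adjacent in $G(n,r,<s)$ to at least $D=(1-o(1))\,C_{n-s}^{r-s}$ members of $S_T$. Indeed, a $y\supset T$ with $|x\cap y|\ge s$ must contain at least one element of $x\setminus T$. The fraction of such $y$ in $S_T$ is at most $r\cdot (r-s)/(n-s)=O(r^2/n)=o(1)$, using $r^3=o(n)$.

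\emph{Small $k$.} Let $k\le \delta C_{n-s}^{r-s}$ for a small constant $\delta$. Each $x\in\mathcal A$ then has at least $D-k\ge D/2$ $G$-neighbours in $\mathcal M\cap S_T$, and all of these edges must be deleted. So for fixed $T$, $\mathcal A$ and missing set $S_T\setminus\mathcal M$, the probability is at most $(1-p)^{kD/2}=e^{-kD/(2Q)}$. The number of configurations is at most $C_n^s\cdot (C_n^r)^k\cdot (C_{n-s}^{r-s})^{k-1}\le n^{s+2rk}$. Summing over $k$ gives
\[
\sum_{k\ge1}\exp\bigl(s\ln n+2rk\ln n-kD/(2Q)\bigr)=o(1),
\]
since $D/Q\gg r\ln n$; this follows from $Qr^3=o(n/\ln n)$, because $D\ge (n/r)^{r-s}/2\ge n/r$ when $r-s\ge1$ and $r^2=o(n)$. (In this range the hypothesis is far stronger than needed.)

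\emph{Large $k$: the main obstacle.} When $k$ is comparable to $C_{n-s}^{r-s}$, the union bound over $\mathcal A$ has $\exp(\Theta(k\,r\ln n))$ terms. Each $x\in\mathcal A$ may meet only a few surviving star elements, so edges into the star are not enough. I would instead use that $\mathcal M$ is far from every star: by maximality of $T$, no star contains more than $|\mathcal M|-k$ elements of $\mathcal M$. The first step is to show that $\mathcal M$ contains at least $c\,k\,C_{n-s}^{r-s}\cdot r^{-2}$ edges of $G(n,r,<s)$. For this I would run a supersaturation version of Theorem~\ref{thB}: greedily strip from $\mathcal M$ the vertices of high $G[\mathcal M]$-degree until an $s$-intersecting remainder is left. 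By Theorem~\ref{thB} that remainder is either inside a star, which contradicts $k$ being large, or has size at most the right-hand side of \eqref{eq:frankl}, which is $O(r^2/n)\,C_{n-s}^{r-s}$. The second step controls the entropy of $\mathcal M$ by a container/fingerprint encoding in the style of \cite{BNR}. Every independent $\mathcal M$ is determined by a small fingerprint $\mathcal F\subset\mathcal M$, of size about $Q\,r\ln n$ times the number of "defects", together with the set of non-neighbours of $\mathcal F$. This replaces the factor $(C_n^r)^k$ by $\exp(O(|\mathcal F|\,r\ln n))$. Balancing this count against $(1-p)^{e(G[\mathcal M])}$ is exactly where the condition $Qr^3=o(n/\ln n)$ should enter: $r\ln n$ comes from encoding one vertex, $r^2/n$ from the Frankl–Ahlswede–Khachatrian gap, and $Q$ from converting edge counts into probability. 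I expect this balancing step to be the genuinely delicate part of the proof. Combining the two ranges gives $\mathsf P(\alpha>C_{n-s}^{r-s})\to0$.
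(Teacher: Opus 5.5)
Your small-$k$ argument is correct. It is essentially the paper's estimate of $P_3$, run with all $k$ outside vertices instead of one, and as you note it needs much less than the hypothesis. The paper does not reprove Theorem~\ref{thE}; Remark~\ref{zam:ogarok} says it rests on the same crude first-moment bound as \S\,3.

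The gap is the range $k>\delta M$ (where $M=C_{n-s}^{r-s}$), which is where $Qr^3=o(n/\ln n)$ is actually used, and it is not proved. The bound $e(\mathcal M)\ge ckM/r^2$ is only asserted, and the stripping sketch does not give it. An $s$-intersecting remainder contradicts the maximality of $T$ only if fewer than $k$ vertices were stripped, and controlling the stripping that way yields only Tur\'an-type bounds. The fingerprint/container step is a heuristic with no estimate attached. It is also misdirected: if you had $e(\mathcal M)\ge ckM/r^2$, your own count $n^{s+2rk}$ would already finish the proof, since $M/(r^2Q)\ge n^2/(r^4Q)\gg r\ln n$.

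The missing observation is that for $k>\delta M$ you can pay the full entropy of the family, $C_{C_n^r}^{M+1}\le e^{(M+1)r\ln n}$ (your $n^{s+2rk}$ is of the same order). It then suffices to have $e(\mathcal M)\gg Q\,(M+1)\,r\ln n$, and no containers are needed. This is the paper's $P_1$/$P_2$ scheme with $\ln C_n^r\le r\ln n$ in place of $(1+o(1))s\ln(n/r)$ in \eqref{eq:count}.

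The steps are as follows.
\begin{itemize}
\item[1)] Put $M'=4Mr^2/n$. Using only $r-s\ge2$ and $r^2=o(n)$, one checks $\varphi(n,r,s)<M'/2$ and $r\,C_{n-s-1}^{r-s-1}\le M'/2$. The summand $s$ in $\varphi$ is absorbed because $M\ge(n/r)^2$.
\item[2)] If $\alpha(\mathcal M)\le M'$, then \eqref{eq:turan} gives $e(\mathcal M)\ge\frac{M+1}{2}\bigl(\frac{n}{4r^2}-1\bigr)$. The requirement $\frac{n}{r^2Q}\gg r\ln n$ is exactly $Qr^3=o(n/\ln n)$.
\item[3)] If $\alpha(\mathcal M)>M'$, Theorem~\ref{thB} puts a maximum $s$-intersecting subfamily inside a star. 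By your choice of $T$ this gives $\alpha(\mathcal M)=M+1-k$. Your degree estimate then yields $e(\mathcal M)\ge k\,(M+1-k-M'/2)\ge\delta M M'/2\asymp M^2r^2/n$. This beats $QMr\ln n$ because $Mr/(nQ\ln n)\ge n/(rQ\ln n)\to\infty$.
\end{itemize}

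Since Theorem~\ref{thE} allows bounded $r$, you need this $\asymp MM'$ bound in step 3. The paper's cruder $e(A)\ge M'^2/2$ relies on $M(r^2/n)^3\to\infty$, which can fail here.
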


For $p\to0$ we have $Q=(1+o(1))/p$, and therefore the condition
$Qr^3=o(n/\ln n)$ means, in essence, that $p\gg r^3\ln n/n$. The main
result of the present paper is that in the range of
parameters $s\to\infty$, $s=o(r)$ this threshold for $p$ can be lowered
by a factor of about $r/s$.

\subsection{The main result}

\begin{theorem}
\label{th:main}
Let $r=r(n)\to\infty$ and $s=s(n)\to\infty$ as $n\to\infty$, where
\begin{displaymath}
s=o(r),\qquad r^2=o(n),
\end{displaymath}
and let $p=p(n)\in[0,1]$ satisfy the inequality
\begin{equation}
\label{eq:pbound}
p\ge p_0(n):=\frac{16\,s r^2\ln(n/r)}{n}.
\end{equation}
Then
\begin{displaymath}
\mathsf{P}\bigl(\alpha(G_p(n,r,<s))=C_{n-s}^{r-s}\bigr)\to 1,
\qquad n\to\infty.
\end{displaymath}
\end{theorem}

Let us make several comments.

\begin{remark}
\label{zam:p0}
The condition \eqref{eq:pbound} can be satisfied only if $p_0(n)\le1$,
i.e., for $16\,sr^2\ln(n/r)\le n$. If in addition $sr^2=o(n/\ln n)$,
then
\begin{displaymath}
p_0(n)=\frac{16\,sr^2\ln(n/r)}{n}\le\frac{16\,sr^2\ln n}{n}\to 0,
\qquad n\to\infty,
\end{displaymath}
and Theorem~\ref{th:main} guarantees stability for probabilities $p$
tending to zero, i.e., under the deletion of \textit{almost all} edges
of the graph $G(n,r,<s)$. For $sr^2\asymp n/\ln n$ the theorem is
applicable only for $p$ bounded away from zero: for example, for
$sr^2=n/(32\ln n)$ we have $p_0(n)\le1/2$, so that the assertion is true
for $p=1/2$ --- but then the fraction of deleted edges does not tend to
one. Note at the same time that for $s=1$ the threshold probability from
Theorem~\ref{thD} is considerably smaller than the quantity $p_0$: the
bound \eqref{eq:pbound} decreases more slowly than $n^{-1}$, whereas
$p_c(n,r)$ decreases practically as $\bigl(C_{n-1}^{r-1}\bigr)^{-1}$.
The question of finding the threshold probability for growing $s$
remains open.
\end{remark}

\begin{remark}
Let us compare Theorem~\ref{th:main} with Theorem~\ref{thE}. In the
domain of parameters common to them ($s\to\infty$, $s=o(r)$, $p\to0$)
the condition $Qr^3=o(n/\ln n)$ requires $p\gg r^3\ln n/n$, whereas the
bound \eqref{eq:pbound} allows $p\asymp sr^2\ln n/n$, i.e.,
substantially smaller values of $p$ --- the gain is of order
$r/s\to\infty$. Moreover, the condition $r^2=o(n)$ is substantially
milder as regards the admissible growth of $r$ than $Qr^3=o(n/\ln n)$.
On the other hand, Theorem~\ref{thE} requires neither $s\to\infty$ nor
$s=o(r)$, so that the results do not cover each other.
\end{remark}

\begin{remark}
\label{zam:ogarok}
A comparison with Theorem~\ref{thOg} shows that both the bound
\eqref{eq:pbound} and the bound of Theorem~\ref{thE} are rather far from
the true threshold: for fixed $r$ and $s$ the threshold has order
$\ln n\big/C_{n-s}^{r-s}$, whereas \eqref{eq:pbound} decreases more
slowly than $n^{-1}$. The reason for this gap is methodological.
Theorems~\ref{thE} and~\ref{th:main} rely on a crude first-moment
bound, but on the other hand they work for any growth of $r$ and $s$; in
\cite{BNR,OgRai}, on the contrary, a fine analysis of the structure of
near-maximum independent sets is used, which relies essentially on the
parameters being fixed. We believe that the method of \cite{OgRai}
admits an extension at least to slowly growing $r$ and $s$, and that
such an extension would substantially improve both Theorem~\ref{thE} and
Theorem~\ref{th:main}; this seems to us the most promising direction for
further work. Until then, however, Theorems~\ref{thE} and~\ref{th:main}
remain the only results covering the range of rapidly growing $r$
and~$s$, and Theorem~\ref{th:main} gives in this range the best known
bound for $s\to\infty$, $s=o(r)$.
\end{remark}

\begin{remark}
The conditions $s=o(r)$ and $r^2=o(n)$ are quite natural for the
technique employed: they imply, in particular, the inequality
$n>(r-s+1)(s+1)$, which ensures the applicability of
Theorem~\ref{thB}; moreover, for $r^2=o(n)$ the asymptotics of the
binomial coefficients take a simple form. We have not optimized the
constant $16$ in \eqref{eq:pbound}: a slight modification of the proof
allows one to decrease it, but obtaining the optimal dependence
apparently requires new ideas.
\end{remark}

\begin{remark}
In the range of parameters under consideration the equality
$\alpha(G(n,r,<s))=C_{n-s}^{r-s}$ holds for all sufficiently large $n$:
a family of cardinality greater than $C_{n-s}^{r-s}$ is not contained in
a star, while the right-hand side of \eqref{eq:frankl} is much smaller
than $C_{n-s}^{r-s}$ (see Lemma~\ref{lem:asymp}\,(d) below). Therefore,
Theorem~\ref{th:main} indeed asserts stability: with high probability
the independence number of the random subgraph {\it coincides} with the
independence number of the original graph.
\end{remark}

The rest of the paper is organized as follows. In \S\,\ref{sec:prelim}
we introduce the notation and prove auxiliary results, and in
\S\,\ref{sec:proof} we prove Theorem~\ref{th:main}.
\section{Notation and auxiliary results}
\label{sec:prelim}

Throughout what follows we assume that the hypotheses of
Theorem~\ref{th:main} are satisfied: $r\to\infty$, $s\to\infty$,
$s=o(r)$, $r^2=o(n)$. All asymptotics and all signs $o(\,\cdot\,)$ refer
to $n\to\infty$; all statements ``for all sufficiently large $n$'' are
used without further stipulation. For brevity we denote by $G$ the graph
$G(n,r,<s)$, and by $G_p$ its random subgraph $G_p(n,r,<s)$. Put
\begin{displaymath}
M:=C_{n-s}^{r-s},\qquad M':=\frac{4Mr^2}{n},\qquad
L:=s\ln\frac{n}{r}.
\end{displaymath}

Throughout, $C_n^r$ denotes the binomial coefficient $\binom{n}{r}$.

For an $s$-element set $W\subset[n]$ we denote by
$S_W:=\{x\in V(n,r)\colon x\supseteq W\}$ the star centered at $W$;
$|S_W|=M$, and every star is an independent set of the graph $G$.
For $A\subseteq V(n,r)$ we denote by $e(A)$ the number of edges of the
graph $G$ both of whose endpoints lie in $A$, and by $\alpha(A)$ the
maximum cardinality of a subset of $A$ independent in $G$.

We note at once several simple consequences of the hypotheses of the
theorem. First, $r^2=o(n)$ by hypothesis, and hence for large $n$
\begin{equation}
\label{eq:lnnr}
r\le\sqrt{n},\qquad\text{and therefore}\qquad
\ln\frac{n}{r}\ge\frac{1}{2}\ln n .
\end{equation}
Second, $rs\le r^2 = o(n)$, and hence $n>(r-s+1)(s+1)$, so that
Theorem~\ref{thB} is applicable. Finally,
$L=s\ln(n/r)\to\infty$.

\begin{lemma}
\label{lem:asymp}
As $n\to\infty$ the following relations hold:
\begin{itemize}
\item[(a)] $\ln C_n^r-\ln C_{n-s}^{r-s}=(1+o(1))\,s\ln(n/r)$;
\item[(b)] $\ln M=(1+o(1))\,(r-s)\ln(n/r)$; in particular,
$M\,(r^2/n)^3\to\infty$;
\item[(c)] $r\,C_n^{r-s-1}=(1+o(1))\,M\,\dfrac{r(r-s)}{n}$;
in particular, $r\,C_n^{r-s-1}<M'/2$;
\item[(d)] the right-hand side of \eqref{eq:frankl}, which we denote
by $\varphi(n,r,s)$, satisfies the inequality
$\varphi(n,r,s)<M'/2$.
\end{itemize}
\end{lemma}

\begin{proof}
(a) Let $a$, $b$ be positive integers such that $b\to\infty$ and
$b^2=o(a)$. Then
\begin{displaymath}
C_a^b=\frac{a^b}{b!}\prod_{i=1}^{b-1}\Bigl(1-\frac{i}{a}\Bigr)
=\frac{a^b}{b!}\,e^{-\frac{b^2}{2a}(1+o(1))}
=(1+o(1))\,\frac{a^b}{b!},
\end{displaymath}
and by Stirling's formula
\begin{equation}
\label{eq:stirling}
\ln C_a^b=b\ln\frac{a}{b}+b+O(\ln b).
\end{equation}
The condition $b^2=o(a)$ is satisfied for both pairs $(a,b)=(n,r)$ and
$(a,b)=(n-s,r-s)$, since $r^2=o(n)$. Further, using the expansion of the
logarithm and the relations $s=o(r)$, $rs=o(n)$, we obtain
\begin{multline}
\label{eq:middle}
(r-s)\ln\frac{n-s}{r-s}
=(r-s)\Bigl(\ln\frac{n}{r}+\ln\Bigl(1-\frac{s}{n}\Bigr)
-\ln\Bigl(1-\frac{s}{r}\Bigr)\Bigr)=\\
=(r-s)\ln\frac{n}{r}
+(r-s)\Bigl(-\frac{s}{n}+O\Bigl(\frac{s^2}{n^2}\Bigr)
+\frac{s}{r}+O\Bigl(\frac{s^2}{r^2}\Bigr)\Bigr)
=(r-s)\ln\frac{n}{r}+s+o(s).
\end{multline}
Hence and from \eqref{eq:stirling}
\begin{multline*}
\ln C_n^r-\ln C_{n-s}^{r-s}
=r\ln\frac{n}{r}-(r-s)\ln\frac{n-s}{r-s}+s+O(\ln r)=\\
=s\ln\frac{n}{r}+O(s^2/r)+O(\ln r)+o(s)
=(1+o(1))\,s\ln\frac{n}{r},
\end{multline*}
where in the last step we have used the relations $s^2/r=o(s)$ and
$\ln r\le\ln n\le 2\ln(n/r)=o(s\ln(n/r))$ (see~\eqref{eq:lnnr}).

(b) From \eqref{eq:stirling} and \eqref{eq:middle}
\begin{displaymath}
\ln M=(r-s)\ln\frac{n}{r}+s+o(s)+(r-s)+O(\ln r)
=(1+o(1))\,(r-s)\ln\frac{n}{r},
\end{displaymath}
since $(r-s)+s+O(\ln r)=O(r)=o\bigl((r-s)\ln(n/r)\bigr)$. Further, by
\eqref{eq:lnnr} and $s=o(r)$
\begin{displaymath}
\ln\Bigl(M\Bigl(\frac{r^2}{n}\Bigr)^3\Bigr)
=(1+o(1))(r-s)\ln\frac{n}{r}-3\ln\frac{n}{r^2}
\ge(1+o(1))\,\frac{r}{2}\cdot\frac{\ln n}{2}-3\ln n\to\infty.
\end{displaymath}

(c) As in part~(a), $C_n^{r-s-1}=(1+o(1))\,n^{r-s-1}/(r-s-1)!$ and
$M=(1+o(1))\,(n-s)^{r-s}/(r-s)!$, whence
\begin{multline*}
\frac{r\,C_n^{r-s-1}}{M}
=(1+o(1))\,\frac{r(r-s)}{n}
\Bigl(\frac{n}{n-s}\Bigr)^{r-s}
=\\
=(1+o(1))\,\frac{r(r-s)}{n}\,e^{O(rs/n)}
=(1+o(1))\,\frac{r(r-s)}{n}\,,
\end{multline*}
since $rs=o(n)$. In particular,
$r\,C_n^{r-s-1}\le(1+o(1))\,Mr^2/n<2Mr^2/n=M'/2$.

(d) We estimate both quantities under the maximum sign in
\eqref{eq:frankl}. For the first of them, since $s+3\le r$ for large $n$
(recall that $s=o(r)$), by part~(c) we have
\begin{displaymath}
(s+2)\,C_{n-s-2}^{r-s-1}+C_{n-s-2}^{r-s-2}
\le(s+3)\,C_{n}^{r-s-1}\le r\,C_n^{r-s-1}<\frac{M'}{2}.
\end{displaymath}
Let us pass to the second one. Put $t_i:=C_{r-s+1}^{i}\,C_{n-r-1}^{r-s-i}$.
Then for $1\le i\le r-s-1$
\begin{displaymath}
\frac{t_{i+1}}{t_i}
=\frac{r-s+1-i}{i+1}\cdot\frac{r-s-i}{n-2r+s+i}
\le\frac{(r-s)^2}{n-2r}\le\frac{2r^2}{n}=:q,
\end{displaymath}
where $q=o(1)$. Consequently, the sum in the definition of $\varphi$ is
majorized by a geometric progression:
\begin{displaymath}
\sum_{i=1}^{r-s}t_i\le\frac{t_1}{1-q}
=(1+o(1))\,(r-s+1)\,C_{n-r-1}^{r-s-1}
\le(1+o(1))\,(r-s+1)\,C_{n}^{r-s-1}.
\end{displaymath}
By part~(c), $C_n^{r-s-1}=(1+o(1))\,M(r-s)/n$, and hence
\begin{displaymath}
\sum_{i=1}^{r-s}t_i+s\le(1+o(1))\,\frac{M(r-s+1)(r-s)}{n}+s
\le(1+o(1))\,\frac{Mr^2}{n}<\frac{2Mr^2}{n}=\frac{M'}{2},
\end{displaymath}
where we have taken into account that $s=o(Mr^2/n)$ (the quantity
$Mr^2/n$ tends to infinity by part~(b)).
\end{proof}

The next lemma is the key structural observation: a large independent
set must lie in a star, and every vertex outside this star has many
neighbors inside it.

\begin{lemma}
\label{lem:star}
Let $n$ be sufficiently large, let $A\subseteq V(n,r)$, and let
$B\subseteq A$ be a subset of $A$ of maximum cardinality independent in
$G$, and suppose that $|B|>M'$. Then:
\begin{itemize}
\item[(i)] there exists a star $S_W\supseteq B$;
\item[(ii)] no vertex $u\in A\setminus B$ belongs to $S_W$, and each
such vertex is joined by an edge of the graph $G$ to at least
$|B|-r\,C_n^{r-s-1}$ vertices of the set $B$.
\end{itemize}
\end{lemma}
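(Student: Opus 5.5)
Part (i) comes straight from Theorem~\ref{thB}. The set $B$ is independent in $G$, so it is an $s$-intersecting family of $r$-element subsets of $[n]$. The hypothesis $n>(r-s+1)(s+1)$ holds for large $n$, as noted above. Hence either $B$ lies in some star, or $|B|\le\varphi(n,r,s)$. By Lemma~\ref{lem:asymp}\,(d) we have $\varphi(n,r,s)<M'/2<M'<|B|$, so the second alternative is impossible. This gives a star $S_W\supseteq B$ with $|W|=s$.

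For the first claim of (ii), take $u\in A\setminus B$ and suppose $u\in S_W$. Every element of $B\cup\{u\}$ contains $W$, so any two of them share at least $s$ elements. Thus $B\cup\{u\}\subseteq A$ is independent in $G$ and strictly larger than $B$, contradicting the maximality of $|B|$. So $u\notin S_W$, which means $|u\cap W|\le s-1$.

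For the degree bound, I will count the vertices $x\in B$ that are \emph{not} adjacent to $u$, i.e.\ those with $|x\cap u|\ge s$, and show there are at most $r\,C_n^{r-s-1}$ of them. Such an $x$ contains $W$ and meets $u$ in at least $s$ elements. Since $|u\cap W|\le s-1$, at least one of these common elements lies in $u\setminus W$. Hence $x\supseteq W\cup\{v\}$ for some $v\in u\setminus W$. There are at most $r$ choices of $v$. For each fixed $v$, the set $x$ is determined by its remaining $r-s-1$ elements, chosen from $[n]\setminus(W\cup\{v\})$, which gives at most $C_{n-s-1}^{r-s-1}\le C_n^{r-s-1}$ options. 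So the non-neighbours of $u$ in $B$ number at most $r\,C_n^{r-s-1}$, and $u$ is adjacent in $G$ to at least $|B|-r\,C_n^{r-s-1}$ vertices of $B$.

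No step here is really an obstacle. The substantive input is Theorem~\ref{thB} together with the estimate of Lemma~\ref{lem:asymp}\,(d). The only point needing care is the observation that a vertex outside the star must meet every non-neighbour in $B$ at some element outside $W$.
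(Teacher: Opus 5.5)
Your proof is correct and follows the paper's argument essentially step for step: Theorem~\ref{thB} with Lemma~\ref{lem:asymp}\,(d) gives (i), maximality of $B$ rules out $u\in S_W$, and a forced element $v\in u\setminus W$ bounds the non-neighbours by $r\,C_n^{r-s-1}$. The only difference is cosmetic: you count non-neighbours inside $B$, using the intermediate bound $C_{n-s-1}^{r-s-1}$, whereas the paper counts them in all of $S_W$; the final bound is unchanged.
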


\begin{proof}
(i) The set $B$ is independent in $G$, i.e., it is an $s$-intersecting
family. For large $n$ we have $n>(r-s+1)(s+1)$, and hence by
Theorem~\ref{thB} either $B$ is contained in a star, or
$|B|\le\varphi(n,r,s)$. The latter is impossible: by
Lemma~\ref{lem:asymp}\,(d), $\varphi(n,r,s)<M'/2<|B|$.

(ii) Let $u\in A\setminus B$. If we had $u\in S_W$, then the set
$B\cup\{u\}\subseteq A$ would be independent in $G$ (a star is
independent) and strictly larger than $B$, which contradicts the
maximality of $B$. Hence $u\notin S_W$, i.e., $|u\cap W|\le s-1$.

Let us estimate the number of vertices $w\in S_W$ that are {\it not
adjacent} to $u$ in the graph $G$, i.e., such that $|w\cap u|\ge s$.
Since $|w\cap u\cap W|\le s-1$, every such vertex $w$ must contain,
besides all elements of $W$, at least one element of the set
$u\setminus W$. The number of such $w$ does not exceed
$|u\setminus W|\cdot C_{n}^{r-s-1}\le r\,C_n^{r-s-1}$
(we choose an element $x\in u\setminus W$, and then arbitrarily the
remaining $r-s-1$ elements of the vertex $w\supseteq W\cup\{x\}$). Hence
$u$ is adjacent in $G$ to at least $|B|-r\,C_n^{r-s-1}$ vertices of $B$.
\end{proof}

We shall also need a classical corollary of Tur\'an's theorem (see, for
example, \cite{AlonSpencer}): for any set of vertices
$A\subseteq V(n,r)$
\begin{equation}
\label{eq:turan}
e(A)\ \ge\ \frac{|A|}{2}\Bigl(\frac{|A|}{\alpha(A)}-1\Bigr).
\end{equation}

Finally, we give the standard bounds that we shall use when counting the
number of sets: for positive integers $N\ge k\ge 1$
\begin{equation}
\label{eq:binom}
C_N^k\le\Bigl(\frac{eN}{k}\Bigr)^{k},
\qquad
\sum_{i=0}^{m}C_N^i\le(m+1)\Bigl(\frac{eN}{m}\Bigr)^{m}
\quad(1\le m\le N/2).
\end{equation}
\section{Proof of Theorem~\ref{th:main}}
\label{sec:proof}

\subsection{Outline of the proof}

As was noted in the introduction, $\alpha(G_p)\ge\alpha(G)=M$ for all
sufficiently large $n$, so it suffices to prove that
\begin{displaymath}
\mathsf{P}\bigl(\alpha(G_p)\ge M+1\bigr)\to 0,\qquad n\to\infty.
\end{displaymath}
The event $\{\alpha(G_p)\ge M+1\}$ means that there exists a set
$A\subset V(n,r)$ of cardinality $M+1$ which is independent in $G_p$.
Classifying such sets according to the value of $\alpha(A)$, we bound
the probability of this event by a sum of three terms:
\begin{equation}
\label{eq:split}
\mathsf{P}\bigl(\alpha(G_p)\ge M+1\bigr)\le P_1+P_2+P_3,
\end{equation}
where
\begin{align*}
P_1&:=\mathsf{P}\bigl(\exists\,A\colon|A|=M+1,\ A\text{ is independent in }G_p,\
\alpha(A)\le M'\bigr),\\
P_2&:=\mathsf{P}\bigl(\exists\,A\colon|A|=M+1,\ A\text{ is independent in }G_p,\
M'<\alpha(A)\le M-M'\bigr),\\
P_3&:=\mathsf{P}\bigl(\exists\,A\colon|A|=M+1,\ A\text{ is independent in }G_p,\
\alpha(A)>M-M'\bigr).
\end{align*}
(Note that $M'=4Mr^2/n=o(M)$, so that for large $n$ we indeed have
$M'<M-M'$.) In the next three subsections we show that each of the terms
$P_1$, $P_2$, $P_3$ tends to zero.

In what follows we repeatedly use the general bound below. If a set $A$
with $|A|=M+1$ is independent in $G_p$, then none of the $e(A)$ edges of
the graph $G$ inside $A$ survives in $G_p$; the probability of this
equals $(1-p)^{e(A)}\le e^{-p\,e(A)}$. Moreover, by \eqref{eq:binom} and
Lemma~\ref{lem:asymp}\,(a), the number of ways to choose $A$ does not
exceed
\begin{equation}
\label{eq:count}
C_{C_n^r}^{\,M+1}\le\Bigl(\frac{e\,C_n^r}{M+1}\Bigr)^{M+1}
=\exp\Bigl\{(M+1)\bigl(\ln C_n^r-\ln(M+1)+1\bigr)\Bigr\}
=e^{(1+o(1))(M+1)L},
\end{equation}
where, we recall, $L=s\ln(n/r)\to\infty$.

\subsection{Estimating $P_1$: small $\alpha(A)$}

Let $|A|=M+1$ and $\alpha(A)\le M'$. By Tur\'an's inequality
\eqref{eq:turan}
\begin{displaymath}
e(A)\ge\frac{M+1}{2}\Bigl(\frac{M+1}{M'}-1\Bigr),
\end{displaymath}
whence, taking into account $M'=4Mr^2/n$ and condition \eqref{eq:pbound},
\begin{displaymath}
p\,e(A)\ge\frac{M+1}{2}\Bigl(\frac{p\,n}{4r^2}-p\Bigr)
\ge\frac{M+1}{2}\bigl(4L-1\bigr)\ge(M+1)\,(2L-1),
\end{displaymath}
since $p\,n/(4r^2)\ge 16sr^2\ln(n/r)/(4r^2)=4s\ln(n/r)=4L$ and
$p\le1$. Estimating the probability of a union of events by the sum of
their probabilities and using \eqref{eq:count}, we obtain
\begin{displaymath}
P_1\le
\exp\Bigl\{(1+o(1))(M+1)L-(M+1)(2L-1)\Bigr\}
=\exp\Bigl\{-(1+o(1))(M+1)L\Bigr\}\to 0,
\end{displaymath}
since $(M+1)L\to\infty$.

\subsection{Estimating $P_2$: intermediate $\alpha(A)$}

Let now $|A|=M+1$ and $M'<\alpha(A)\le M-M'$. Fix a subset
$B\subseteq A$ of maximum cardinality which is independent in $G$;
then $|B|=\alpha(A)>M'$, and Lemma~\ref{lem:star} is applicable: there
exists a star $S_W\supseteq B$, while every vertex $u\in A\setminus B$
is adjacent in $G$ to at least
\begin{displaymath}
|B|-r\,C_n^{r-s-1}>M'-\frac{M'}{2}=\frac{M'}{2}
\end{displaymath}
vertices of $B$ (here Lemma~\ref{lem:asymp}\,(c) has been used). The
number of vertices in $A\setminus B$ is at least
\begin{displaymath}
M+1-\alpha(A)\ge M+1-(M-M')>M',
\end{displaymath}
and the edges mentioned are distinct for distinct $u$. Consequently,
\begin{displaymath}
e(A)\ge M'\cdot\frac{M'}{2}=\frac{M'^2}{2},
\end{displaymath}
and by condition \eqref{eq:pbound}
\begin{displaymath}
p\,e(A)\ge\frac{16sr^2\ln(n/r)}{n}\cdot\frac{1}{2}
\Bigl(\frac{4Mr^2}{n}\Bigr)^{2}
=128\,L\,M\cdot M\Bigl(\frac{r^2}{n}\Bigr)^{3}.
\end{displaymath}
From this and \eqref{eq:count} we get
\begin{multline*}
P_2\le\exp\Bigl\{(1+o(1))(M+1)L-128\,L\,M\cdot
M\Bigl(\frac{r^2}{n}\Bigr)^{3}\Bigr\}\le\\
\le\exp\Bigl\{-L\,M\Bigl(128\,M\Bigl(\frac{r^2}{n}\Bigr)^{3}
-3\Bigr)\Bigr\}\to 0,
\end{multline*}
since $M(r^2/n)^3\to\infty$ by Lemma~\ref{lem:asymp}\,(b).

\subsection{Estimating $P_3$: large $\alpha(A)$}

Finally, let $|A|=M+1$ and $\alpha(A)>M-M'$. Again fix a subset
$B\subseteq A$ of maximum cardinality which is independent in $G$,
$|B|=\alpha(A)>M-M'>M'$. By Lemma~\ref{lem:star} there exists a star
$S_W\supseteq B$, and since $|B|\le|S_W|=M<|A|$, there exists a vertex
$u\in A\setminus B$ which does not belong to $S_W$ and is adjacent in
$G$ to at least
\begin{displaymath}
|B|-r\,C_n^{r-s-1}\ge M-M'-\frac{M'}{2}\ge M-2M'
=M\Bigl(1-\frac{8r^2}{n}\Bigr)=(1+o(1))\,M
\end{displaymath}
vertices of the set $B$. Since $A\supseteq B\cup\{u\}$ is independent in
$G_p$, none of these edges survives in $G_p$.

Thus, the event $\{\exists\,A\colon|A|=M+1,\ A$ is independent in
$G_p$, $\alpha(A)>M-M'\}$ implies the existence of a star $S_W$, of a set
$B\subseteq S_W$ with $|S_W\setminus B|<M'$ and of a vertex $u\notin S_W$
which is joined in $G_p$ to none of the at least $(1+o(1))M$ vertices of
the set $B$ that are adjacent to $u$ in $G$. For a fixed triple
$(W,B,u)$ the probability of the latter event does not exceed
$(1-p)^{(1+o(1))M}\le e^{-(1+o(1))pM}$. As for the number of such
triples, by \eqref{eq:binom} it does not exceed
\begin{displaymath}
C_n^{s}\cdot\sum_{i=0}^{\lceil M'\rceil-1}C_{M}^{i}\cdot C_n^{r}
\le C_n^{s}\cdot (M'+1)\Bigl(\frac{eM}{M'}\Bigr)^{M'}\cdot C_n^{r}.
\end{displaymath}
Consequently,
\begin{equation}
\label{eq:P3}
\ln P_3\le s\ln n+\ln(M'+1)
+M'\Bigl(1+\ln\frac{M}{M'}\Bigr)+r\Bigl(1+\ln\frac{n}{r}\Bigr)
-(1+o(1))\,p\,M .
\end{equation}
We show that the leading term on the right-hand side of \eqref{eq:P3} is
the last one. By condition \eqref{eq:pbound}
\begin{displaymath}
p\,M\ge\frac{16sr^2\ln(n/r)}{n}\,M=4L\,M'.
\end{displaymath}
Let us compare the remaining terms with it.
\begin{itemize}
\item[1)] Since $M/M'=n/(4r^2)\le n$ and $\ln n\le2\ln(n/r)$
(see~\eqref{eq:lnnr}),
\begin{displaymath}
M'\Bigl(1+\ln\frac{M}{M'}\Bigr)\le M'\,(1+\ln n)\le 3M'\ln\frac{n}{r}
=\frac{3}{s}\,L\,M'=o(L\,M').
\end{displaymath}
\item[2)] By Lemma~\ref{lem:asymp}\,(b), $\ln M\ge(1+o(1))\,
\frac{r}{2}\ln\frac{n}{r}$, hence $M\ge e^{r\ln(n/r)/3}\ge n^{r/6}$,
and therefore $M'=4Mr^2/n\ge M/n\ge n^{r/6-1}\gg r\ln n$. Hence
\begin{displaymath}
r\Bigl(1+\ln\frac{n}{r}\Bigr)\le 2r\ln n=o(M')=o(L\,M').
\end{displaymath}
\item[3)] Similarly, $s\ln n\le 2L=o(LM')$ and
$\ln(M'+1)\le\ln M\le 2r\ln n=o(LM')$.
\end{itemize}
Thus, all positive terms on the right-hand side of \eqref{eq:P3} are
$o(pM)$, and therefore
\begin{displaymath}
\ln P_3\le-(1+o(1))\,p\,M\le-(1+o(1))\,4LM'\to-\infty,
\end{displaymath}
i.e., $P_3\to0$.

Combining the bounds $P_1,P_2,P_3\to0$ with \eqref{eq:split}, we complete
the proof of Theorem~\ref{th:main}. The theorem is proved.

\section{Concluding remarks}
\label{sec:concl}

Theorem~\ref{th:main} gives only a sufficient condition for stability,
and the question of the {\it threshold probability} $p_c=p_c(n,r,s)$,
upon crossing which the stability \eqref{eq:stability} is replaced by
instability, remains open: for $s=1$ the answer is given by
Theorem~\ref{thD}, and it shows that a considerable gap remains between
the true threshold and the bound \eqref{eq:pbound}. It would also be
interesting to weaken the condition $r^2=o(n)$: it is dictated by the
technique used (Theorem~\ref{thB} and the asymptotics of binomial
coefficients) rather than by the essence of the problem.

As was noted in Remark~\ref{zam:ogarok}, the most natural way to find the
threshold for growing parameters is to transfer the technique of
\cite{BNR,OgRai} to the case of $r=r(n)$ and $s=s(n)$ growing
sufficiently slowly. Even such an advance would apparently improve
substantially both Theorem~\ref{thE} and Theorem~\ref{th:main}; we intend
to return to this question in the future. Finally, for the model of exact
equality --- the Johnson graphs $G(n,r,s)$, a survey of results on which
is given in \cite{RaiSin} --- the analogous questions on the stability of
the independence number for growing $r$ and $s$ also remain largely
open.

\medskip

\noindent\textbf{Acknowledgments.} The authors are grateful to
M.~M.~Koshelev for a careful reading of the manuscript and for valuable
comments made during the preparation of this work.

\end{document}